\theoremstyle{plain}
	\newtheorem{thm}{Theorem}[section]
	\newtheorem{lem}[thm]{Lemma}
	\newtheorem{prop}[thm]{Proposition}
	\newtheorem{ex}[thm]{Example}
\theoremstyle{definition}
	\newtheorem{dfn}[thm]{Definition}
\theoremstyle{remark}
\DeclareMathOperator{\Isom}{Isom}
\DeclareMathOperator{\lk}{lk}
\newcommand{\bbZ}{\mathbb{Z}}
\newcommand{\bbQ}{\mathbb{Q}}
\newcommand{\bbR}{\mathbb{R}}
\begin{document}

\title{Linking numbers for periodic tangles}

\author[Y. Kotorii]{Yuka Kotorii}
\address[Kotorii]{Mathematics Program, Graduate School of Advanced Science and Engineering, Hiroshima University, 1-7-1 Kagamiyama Higashi-hiroshima City, Hiroshima 739-8521 Japan}
\address[Kotorii]{International Institute for Sustainability with Knotted Chiral Meta Matter (WPI-SKCM$^2$), Hiroshima University, 1-3-1 Kagamiyama Higashi-hiroshima City, Hiroshima 739-8526 Japan}
\address[Kotorii]{RIKEN, interdisciplinary Theoretical and Mathematical Sciences Program, 2-1, Hirosawa, Wako, Saitama 351-0198, Japan}
\email{kotorii@hiroshima-u.ac.jp}

\author[K. Yoshida]{Ken'ichi Yoshida}
\address[Yoshida]{International Institute for Sustainability with Knotted Chiral Meta Matter (WPI-SKCM$^2$), Hiroshima University, 1-3-1 Kagamiyama, Higashi-Hiroshima-shi, Hiroshima 739-8526, Japan}
\email{kncysd@hiroshima-u.ac.jp}

\subjclass[2020]{57K10, 57M10}
\keywords{periodic tangles, linking numbers, Milnor invariants}
\date{}

\begin{abstract}
Periodic tangles are 1-dimensional submanifolds in the 3-space with translational symmetry.  
In this paper, we define the linking numbers for singly, doubly, and triply periodic tangles using appropriate motifs and show that they are well-defined and invariant under link-homotopy. 
Furthermore, we extend the notion to the higher order linking numbers for singly periodic tangles. 
\end{abstract}

\maketitle

\section{Introduction}
\label{section:intro}

The linking number is one of the most fundamental invariants of a link in the 3-space. 
It can be defined in terms of various notions, such as the Gauss linking integral, the first homology group of the link complement, and the crossings in a diagram. 
The diagrammatic definition is easily generalized for the linking number of a link in a thickened surface (i.e., the product of a surface and the closed interval). 
Based on the homological definition, there is a generalization called the higher order linking number (also known as the Milnor invariant), introduced by Milnor~\cite{Milnor1}. 
The order $m$ linking number $\overline{\mu}_{L}(J)$ of an $m$-component link $L$ is defined for a non-repeating sequence $J$ on $\{ 1, \dots, m \}$, 
using the lower central series of the fundamental group of the link complement. 
The (higher order) linking numbers are invariant under link homotopy, which allows self-crossing changes. 

Periodic tangles are 1-dimensional submanifolds in the 3-space with translational symmetry. 
Depending on the number of translational directions, 
they are called singly, doubly, and triply periodic tangles. 
Periodic tangles have been used to model various entangled structures. 
For example, doubly periodic tangles represent textile structures \cite{GMO07}. 
A periodic tangle can be regarded as the preimage of a link in the solid torus $S^{1} \times D^{2}$, the thickened torus $T^{2} \times I$, or the 3-torus $T^{3}$ by the universal covering map. 
Then such a link is called a motif of a periodic tangle. 
Note that periodic lattices for translational symmetry of a periodic tangle are not fixed, 
and so there are many choices of motifs. 
Based on this fact, the equivalence of periodic tangles was introduced in \cite{DLM23}. 
In relation to this, compatibility between isotopies and finite covers of links in $S^{1} \times D^{2}$, $T^{2} \times I$, or $T^{3}$ was investigated in \cite{KMMY25}. 
As a result, a non-split periodic tangle has a unique minimal motif \cite[Theorem 1.2]{KMMY25}. 

For periodic tangles, an analogue of the linking number, called the periodic linking number, was defined in \cite{Panagiotou15, PM18, PML13} 
using the Gauss linking integral with respect to a certain periodic lattice. 
While the periodic linking number of two closed components coincides with the ordinary linking number, 
that of two open components is not a topological invariant. 
Nonetheless, the periodic linking number has been applied to numerical data analysis of polymeric entangled structures. 
On the other hand, topological invariants are still useful for flexible structures. 
In \cite{Takano25}, the relationship between the topological and mechanical structures of knitted fabrics was investigated. 
In many cases, such topological structures are distinguished by the linking numbers of links in $T^{2} \times I$ as motifs. 

In this paper, we introduce the (higher order) linking numbers of components in a periodic tangle. 
The (higher order) linking numbers are invariant under equivalence of periodic tangles. 
More strongly, they are invariant under link-homotopy of periodic tangles, 
which is defined by allowing self-crossing changes. 
These invariants provide a convenient tool for distinguishing periodic tangles, 
which contributes to the topological analysis of flexible materials with periodic structures. 

In Section \ref{section:prelim}, 
we prepare notions for periodic tangles. 
In Section \ref{section:link}, 
we define the linking number $\lk(C_{1}, C_{2}) \in \frac{1}{2} \bbZ \cup \{ \pm \infty \}$ of two components $C_{1}$ and $C_{2}$ in a periodic tangle. 
For this purpose, we need to take a good motif, in which the images of $C_{1}$ and $C_{2}$ are appropriately separated. 
In a singly or doubly periodic tangle, $\lk(C_{1}, C_{2})$ is defined using the linking number of links in the solid torus or thickened torus. 
In a triply periodic tangle, $\lk(C_{1}, C_{2})$ is defined by regarding $C_{1}$ and $C_{2}$ as contained in a doubly periodic tangle. 
In Section \ref{section:higher}, 
we define the order $m$ linking number $\overline{\mu}_{C}(J) \in \bbQ / \bbZ \cup \{ \pm \infty \}$ of components $C = (C_{1}, \dots, C_{m})$ in a singly periodic tangle for a non-repeating sequence $J$ on $\{ 1, \dots, m \}$, using a good motif. 
At present, we can define them only in the case that all linking numbers of sufficiently small orders vanish. 
In a doubly or triply periodic tangle, we can define the higher order linking number of components by regarding them as contained in a singly periodic tangle. 

\section{Preliminaries}
\label{section:prelim}

A \emph{singly, doubly, or triply periodic tangle} is a 1-dimensional submanifold embedded in the interior of $\bbR \times D^{2}$, $\bbR^{2} \times I$, or $\bbR^{3}$ that is preserved by a translational action of $\bbZ$, $\bbZ^{2}$, or $\bbZ^{3}$, respectively. 
The quotient of a periodic tangle by a translational action is a link in the solid torus $S^{1} \times D^{2}$, the thickened torus $T^{2} \times I$, or the 3-torus $T^{3}$. 
Conversely, the preimage of a link in $S^{1} \times D^{2}$, $T^{2} \times I$, or $T^{3}$ by the universal covering map is a periodic tangle. 

Due to Diamantis, Lambropoulou, and Mahmoudi \cite{DLM23}, 
two periodic tangles are \emph{equivalent} 
if there is an ambient isotopy between them that is a composition of 
\begin{itemize}
\item periodic isotopies (i.e., isotopies equivariant under some translational actions) and 
\item affine transformations that are isotopic to the identity. 
\end{itemize}
Let $X$ be $S^{1} \times D^{2}$, $T^{2} \times I$, or $T^{3}$. 
A \emph{motif} of a periodic tangle $\tau$ is a link in $X$ whose preimage by the universal covering map is equivalent to $\tau$. 
Note that translational actions for periodic isotopies are not fixed. 
If the preimage $\widetilde{L} = p^{-1}(L)$ of a motif $L$ of a periodic tangle $\tau$ by a finite covering map $p \colon X \to X$, 
we call $\widetilde{L}$ a ``finite cover'' of $L$ for short. 
Then $\widetilde{L}$ is also a motif of $\tau$. 
In fact, two links $L_{0}$ and $L_{1}$ in $X$ are motifs of a common periodic tangle 
if and only if there is a common finite cover of $L_{0}$ and $L_{1}$ \cite[Proposition 2.4]{KMMY25}. 

For the universal covering map $\widetilde{X} \to X$, 
consider the deck transformation of $\pi_{1}(X)$ on $\widetilde{X}$. 
A fundamental domain for this action is a closed subset $F$ of $\widetilde{X}$ 
such that $\widetilde{X} = \bigcup_{g \in \pi_{1}(X)} g(F)$ and the interiors of $g(F)$ are disjoint. 
If $L$ is a link in $X$, 
we simply say that $F$ is a \emph{fundamental domain} for $L$. 
We will have to take a motif and a fundamental domain that are appropriate to compute the linking numbers. 

A component of a periodic tangle is compact 
if and only if its image in a motif is null-homologous in $X$. 
Then we say that such a component is \emph{closed}. 
Otherwise we say that such a component is \emph{open}. 
Let $C_{1}$ and $C_{2}$ be two oriented open components of a periodic tangle $\tau$. 
Suppose that $C_{1}$ and $C_{2}$ are respectively projected to components $K_{1}$ and $K_{2}$ in a motif of $\tau$. 
If two elements $[K_{1}], [K_{2}] \in H_{1}(X, \bbZ) \cong \bbZ^{n}$ have a non-zero common multiple, 
we say that $C_{1}$ and $C_{2}$ are open components with a \emph{common direction}. 
Otherwise we say that $C_{1}$ and $C_{2}$ are open components with \emph{distinct directions}. 

The \emph{link-homotopy} is an equivalence relation among links that is generated by ambient isotopies and self-crossing changes, where a self-crossing change is a crossing change between arcs belonging to the same component.
The linking numbers and the Milnor invariants for links in $S^{3}$ are invariant under link-homotopy. 
Similarly, we define a link-homotopy of periodic tangles as follows. 
Two periodic tangles are \emph{link-homotopic} if they are transformed by a composition of 
\begin{itemize}
\item periodic isotopies, 
\item affine transformations that are isotopic to the identity, and 
\item periodic self-crossing changes, 
\end{itemize}
where a periodic self-crossing change is equivariant under a translational action and performed between arcs belonging to the same component in a periodic tangle. 
A link-homotopy of periodic tangles induces a link-homotopy of links in the 3-manifold $X$. 
In general, however, the converse does not hold 
because two arcs for a self-crossing change in $X$ may have the preimages in distinct components by the universal covering map.

\section{Linking numbers}
\label{section:link}

\subsection{Singly periodic tangle}

Let $L$ be an oriented link in the solid torus $S^{1} \times D^{2}$ that contains two components $K_{1}$ and $K_{2}$. 
We obtain a diagram of $L$ on $S^{1} \times I$ in general position by the standard projection. 
The \emph{linking number} of $K_{1}$ and $K_{2}$, denoted by $\lk(K_{1}, K_{2})$, is defined to be half of the sum of the signs of crossings of $K_{1}$ with $K_{2}$. 
Since the Reidemeister moves and self-crossing changes do not change $\lk(K_{1}, K_{2})$, it is invariant under link-homotopy. 
Remark that $\lk (K_{1}, K_{2})$ coincides with the linking number of $\iota (K_{1})$ and $\iota (K_{2})$ in $\bbR^{3}$, 
where $\iota \colon S^{1} \times D^{2} \to \bbR^{3}$ is a standard embedding 
such that the linking number of the images of the longitude and the core is equal to zero. 

Let $C_{1}$ and $C_{2}$ be two oriented components of a singly periodic tangle $\tau$. 
The \emph{linking number} of $C_{1}$ and $C_{2}$, denoted by $\lk(C_{1}, C_{2})$, is defined as follows. 
Let $L$ be a motif of $\tau$. 
Suppose that $C_{1}$ and $C_{2}$ are respectively projected to components $K_{1}$ and $K_{2}$ of $L$. 
We say that the motif $L$ is \emph{good} for $C_{1}$ and $C_{2}$ 
if there is a fundamental domain $F$ for $L$ that satisfies the following conditions: 
\begin{itemize}
\item $F = I \times D^{2}$ for a closed interval $I \subset \bbR$. 
\item The interior of $F$ contains the closed components of $C_{1}$ and $C_{2}$. 
\item If $C_{i}$ for $i = 1,2$ is an open component, then the homology class $[K_{i}] \in H_{1}(S^{1} \times D^{2}, \bbZ) \cong \bbZ$ is a generator. 
\end{itemize}
Then the components $K_{1}$ and $K_{2}$ are necessarily distinct. 
Moreover, an open component $C_{i}$ is the preimage of $K_{i}$ by the universal covering map. 

\begin{lem}
\label{lem:good_sp}
Let $C_{1}$ and $C_{2}$ be two oriented components of a singly periodic tangle $\tau$. 
Let $L$ be a motif of $\tau$. 
Then $L$ has a finite cover that is good for $C_{1}$ and $C_{2}$. 
\end{lem}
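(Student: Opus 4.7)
The plan is to pass to a sufficiently large cyclic cover of $L$. Identify $X = S^{1} \times D^{2}$ with $(\bbR/\bbZ) \times D^{2}$, so that the universal cover is $\bbR \times D^{2}$ with $\bbZ$ acting by translation. Write $a_{i} = [K_{i}] \in H_{1}(X, \bbZ) \cong \bbZ$; then $a_{i} = 0$ exactly when $C_{i}$ is closed. If $C_{i}$ is closed it is a compact subset of $\bbR \times D^{2}$, so all closed components among $\{C_{1}, C_{2}\}$ are contained in some bounded $[\alpha, \beta] \times D^{2}$.

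Choose an integer $n \geq 1$ that is divisible by $|a_{i}|$ for every open $C_{i}$ and satisfies $n > \beta - \alpha + 2$. Let $p_{n} \colon X_{n} \to X$ be the $n$-fold cyclic cover and set $\widetilde{L} = p_{n}^{-1}(L)$. By the discussion of finite covers in Section \ref{section:prelim}, $\widetilde{L}$ is again a motif of $\tau$. Let $K_{i}'$ denote the component of $\widetilde{L}$ into which $C_{i}$ projects under the universal covering map $\bbR \times D^{2} \to X_{n}$.

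The substantive verification is the third goodness condition. A standard covering-space calculation shows that, for open $C_{i}$, the preimage $p_{n}^{-1}(K_{i}) \subset X_{n}$ splits into $\gcd(|a_{i}|, n)$ components, each of homology class $a_{i}/\gcd(a_{i}, n)$ in $H_{1}(X_{n}, \bbZ) \cong \bbZ$; by the divisibility of $n$ this equals $\pm 1$, so $[K_{i}']$ is a generator. Consequently, the preimage of $K_{i}'$ in the universal cover $\bbR \times D^{2}$ is connected; since it contains the connected set $C_{i}$ and is contained in $\tau$, it must equal $C_{i}$, giving also the statement following the definition of good.

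For the first two conditions, take $F = I \times D^{2}$ with $I \subset \bbR$ any closed interval of length $n$; this is automatically a fundamental domain for the $n\bbZ$ deck action on $\bbR \times D^{2}$. By the choice of $n$, a suitable translate of $I$ has its interior containing $[\alpha, \beta]$, hence every closed component among $C_{1}, C_{2}$. I do not expect any serious obstacle: the homology calculation for cyclic covers is routine, and the only real content is selecting $n$ large enough to simultaneously meet the divisibility and size requirements.
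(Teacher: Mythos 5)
Your argument is correct and follows essentially the same route as the paper: pass to an $n$-fold cyclic cover with $n$ divisible by $|[K_i]|$ for each open component (so the lifted classes become generators) and $n$ large enough that the closed components fit in the interior of one fundamental domain $I \times D^2$. You carry out the covering-space homology computation more explicitly than the paper does, but the content is the same.
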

\begin{proof}
Since the closed components of $C_{1}$ and $C_{2}$ are compact, 
they are contained in a fundamental domain for a sufficiently large finite cover of $L$. 
If the homology class of $K_{i}$ is $n$ times a generator in $H_{1}(S^{1} \times D^{2}, \bbZ)$, 
then that of the preimage of $K_{i}$ by the $n$-sheeted covering map is a generator. 
If the homology class of $K_{i}$ is a generator in $H_{1}(S^{1} \times D^{2}, \bbZ)$, 
then that of the preimage of $K_{i}$ by any finite covering map is also a generator. 
Hence $L$ has a finite cover that is a good motif. 
\end{proof}

\begin{dfn}
Let $L$ be a good motif of a singly periodic tangle $\tau$ for oriented components $C_{1}$ and $C_{2}$. 
Let $K_{1}$ and $K_{2}$ be as above. 
If both $C_{1}$ and $C_{2}$ are open, 
we define $\lk(C_{1}, C_{2}) \in \{ 0, \pm \infty \}$ 
so that the sign coincides with that of $\lk(K_{1}, K_{2})$. 
Otherwise we define $\lk(C_{1}, C_{2}) = \lk(K_{1}, K_{2}) \in \bbZ$. 
\end{dfn}

\begin{ex}
The left of Figure~\ref{fig:singly_exa1} indicates motifs of singly periodic tangles, 
which are not good for any pair of components. 
The right of Figure~\ref{fig:singly_exa1} indicates good motifs for components $C_{1}$ and $C_{2}$. 
The right motifs are finite covers of the left motifs. 
The linking number $\lk(C_{1}, C_{2})$ of the first example is $+\infty$, and those of the second and third are 1. 
Note that the linking numbers cannot be well computed using the left motifs. 
    \begin{figure}[h]
  \centering
\includegraphics[angle=90, width=0.8\textwidth]{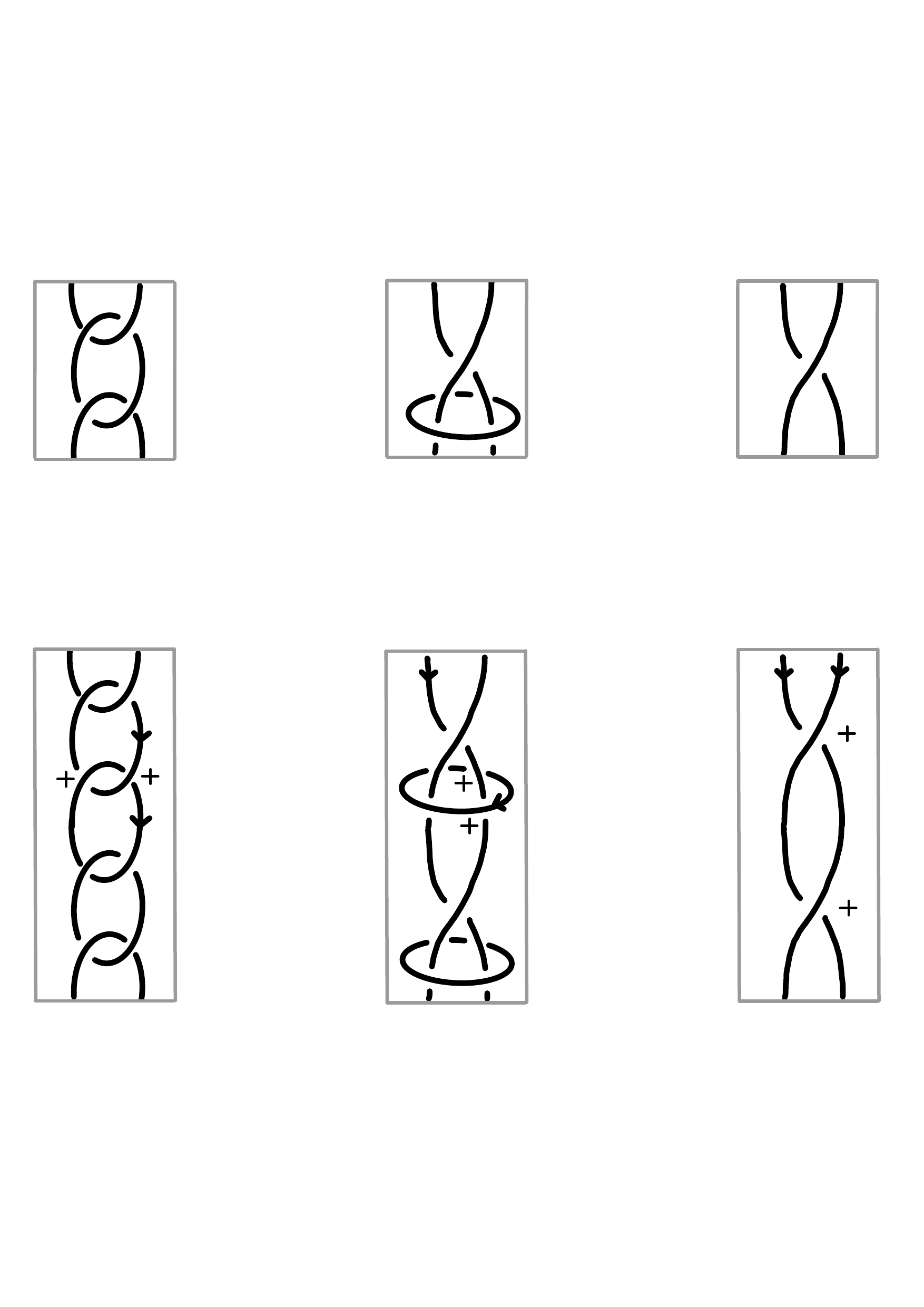} 
 \put(-160,168){$C_1$}\put(-160,184){$C_2$}\put(-160,93){$C_1$}\put(-118,122){$C_2$}\put(-130,43){$C_1$}\put(-110,43){$C_2$}
  \caption{Non-good and good motifs of singly periodic tangles }
  \label{fig:singly_exa1} 
\end{figure}
\end{ex}

\begin{thm}
\label{thm:lk_sp}
Let $C_{1}$ and $C_{2}$ be two oriented components of a singly periodic tangle $\tau$. 
Then the linking number $\lk(C_{1}, C_{2})$ does not depend on the choice of a good motif. 
Moreover, if there is a link-homotopy from $\tau$ to a singly periodic tangle $\tau'$ which maps $C_{1}$ and $C_{2}$ respectively to components $C'_{1}$ and $C'_{2}$, 
then $\lk(C_{1}, C_{2}) = \lk(C'_{1}, C'_{2})$. 
In other words, the linking number is well-defined for a link-homotopy class of a singly periodic tangle. 
\end{thm}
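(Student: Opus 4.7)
The plan is to prove two independent facts which together yield the theorem: first, that $\lk(C_{1},C_{2})$ is unchanged when a good motif $L$ is replaced by a finite cover that is again good (finite-cover invariance); and second, that each elementary move defining a link-homotopy of periodic tangles preserves $\lk(C_{1},C_{2})$. Well-definedness will then follow because any two motifs of $\tau$ share a common finite cover by \cite[Proposition 2.4]{KMMY25}; after applying the construction in the proof of Lemma~\ref{lem:good_sp} once more, this common cover can be arranged to be good for $C_{1}$ and $C_{2}$, and finite-cover invariance forces the two values of $\lk$ to coincide.

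To establish finite-cover invariance, I would fix a diagram of $L$ on $S^{1} \times I$ and observe that the diagram of the $n$-sheeted cover $\widetilde L$ consists of $n$ copies of the base diagram glued cyclically along the $S^{1}$ direction, so every signed crossing of $L$ lifts to $n$ crossings of the same sign. The rest is a case analysis on the types of $C_{1}$ and $C_{2}$. In the closed/closed case, $K_{1}$ and $K_{2}$ each have a distinguished copy inside the chosen fundamental domain of $\widetilde L$ that is an isotopic copy of $K_{i}$; the signed crossings between these distinguished copies are therefore in bijection with the crossings between $K_{1}$ and $K_{2}$ in $L$, so $\lk$ agrees exactly. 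In the open/open case, the hypothesis that $[K_{i}]$ is a generator of $H_{1}(S^{1} \times D^{2},\bbZ)$ ensures that the preimage of $K_{i}$ in the $n$-sheeted cover is connected with homology class again a generator, and the total signed crossing count multiplies by $n$; since $n > 0$ the sign is preserved, which is precisely the invariant kept by the definition $\lk(C_{1},C_{2}) \in \{0,\pm\infty\}$. The mixed case is similar: the distinguished copy of the closed component meets only the portion of the connected open lift lying in its sheet, and the signed count matches the base.

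For link-homotopy invariance, I would decompose by move type. Periodic isotopies and identity-isotopic affine transformations produce an equivalent periodic tangle, so finite-cover invariance applied through the common cover argument handles them. A periodic self-crossing change between two arcs of the same component $C_{i}$ descends to an ordinary self-crossing change of $K_{i}$ in any motif whose fundamental domain is chosen to contain the site of the move; such a change leaves the signed crossings between $K_{1}$ and $K_{2}$ untouched and so does not alter $\lk(C_{1},C_{2})$. The step I expect to be most delicate is the open/open case of finite-cover invariance: the integer $\lk(K_{1},K_{2})$ genuinely scales with the covering degree, and the invariance survives only because the definition records a sign rather than a magnitude. Articulating this precisely---so that passing through larger and larger covers yields a consistent element of $\{0,\pm\infty\}$---is the technical heart of the theorem, while the remaining cases reduce to local crossing bookkeeping.
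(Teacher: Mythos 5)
Your proposal is correct and follows essentially the same route as the paper's proof: finite-cover invariance via the case analysis (equality of $\lk(K'_1,K'_2)$ and $\lk(K_1,K_2)$ except in the open/open case, where the value scales by the covering degree $n$ and only the sign survives), combined with the common-finite-cover argument to compare two good motifs, and with the observation that a link-homotopy of periodic tangles descends to a link-homotopy of motifs in $S^1\times D^2$, whose linking number is link-homotopy invariant. The only cosmetic difference is that you pass to a further cover to make the common cover good, whereas the paper notes that any finite cover of a good motif is already good; both work.
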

\begin{proof}
Note that any finite cover of a good motif is also good by the proof of Lemma~\ref{lem:good_sp}. 
The second assertion follows from the first assertion and the fact that 
the linking number of a link in $S^{1} \times D^{2}$ is invariant under link-homotopy. 

To show the first assertion, let $L'$ be the $n$-sheeted cover of a good motif $L$ of $\tau$ for $n >1$. 
Suppose that $K_{i}$ and $K'_{i}$ for $i =1,2$ are the images of $C_{i}$ respectively in $L$ and $L'$. 
Let $F$ be a fundamental domain for $L$ that satisfies the conditions for a good motif. 
If $C_{i}$ is open, then $K'_{i}$ is the preimage of $K_{i}$ by the finite covering map. 
If $C_{i}$ is closed, then $K'_{i}$ is a component of the preimage of $K_{i}$ by the finite covering map. 
If both $C_{1}$ and $C_{2}$ are open, then $\lk(K'_{1}, K'_{2}) = n \lk(K_{1}, K_{2})$. 
Otherwise the crossings of $K'_{1}$ and $K'_{2}$ are contained in the image of $F$, 
and so $\lk(K'_{1}, K'_{2}) = \lk(K_{1}, K_{2})$. 
Hence the linking number $\lk(C_{1}, C_{2})$ is well-defined. 
\end{proof}

Consider a diagram of the singly periodic tangle $\tau$ on $\bbR \times I$. 
If $C_{1}$ or $C_{2}$ is closed, they have only finitely many crossings. 
In this case, the linking number $\lk(C_{1}, C_{2})$ can be computed by the ordinary way using a diagram on $\bbR \times I$. 
A fundamental domain $F$ is taken for a technical reason to count the crossings. 
If $C_{1}$ and $C_{2}$ are open, they may have infinitely many crossings. 
In the case that all the infinitely many crossings have the same sign, 
the linking number $\lk(C_{1}, C_{2})$ is equal to $+\infty$ or $-\infty$ depending on the sign. 
In the case that there are an infinite number of both positive and negative crossings, 
we have to count the crossings by taking a quotient to a solid torus. 
For example, the linking number $\lk(C_{1}, C_{2})$ for the singly periodic tangle shown in Figure~\ref{fig:singly_exa2} is equal to $+\infty$. 

    \begin{figure}[h]
  \centering
\includegraphics[ width=0.3\textwidth]{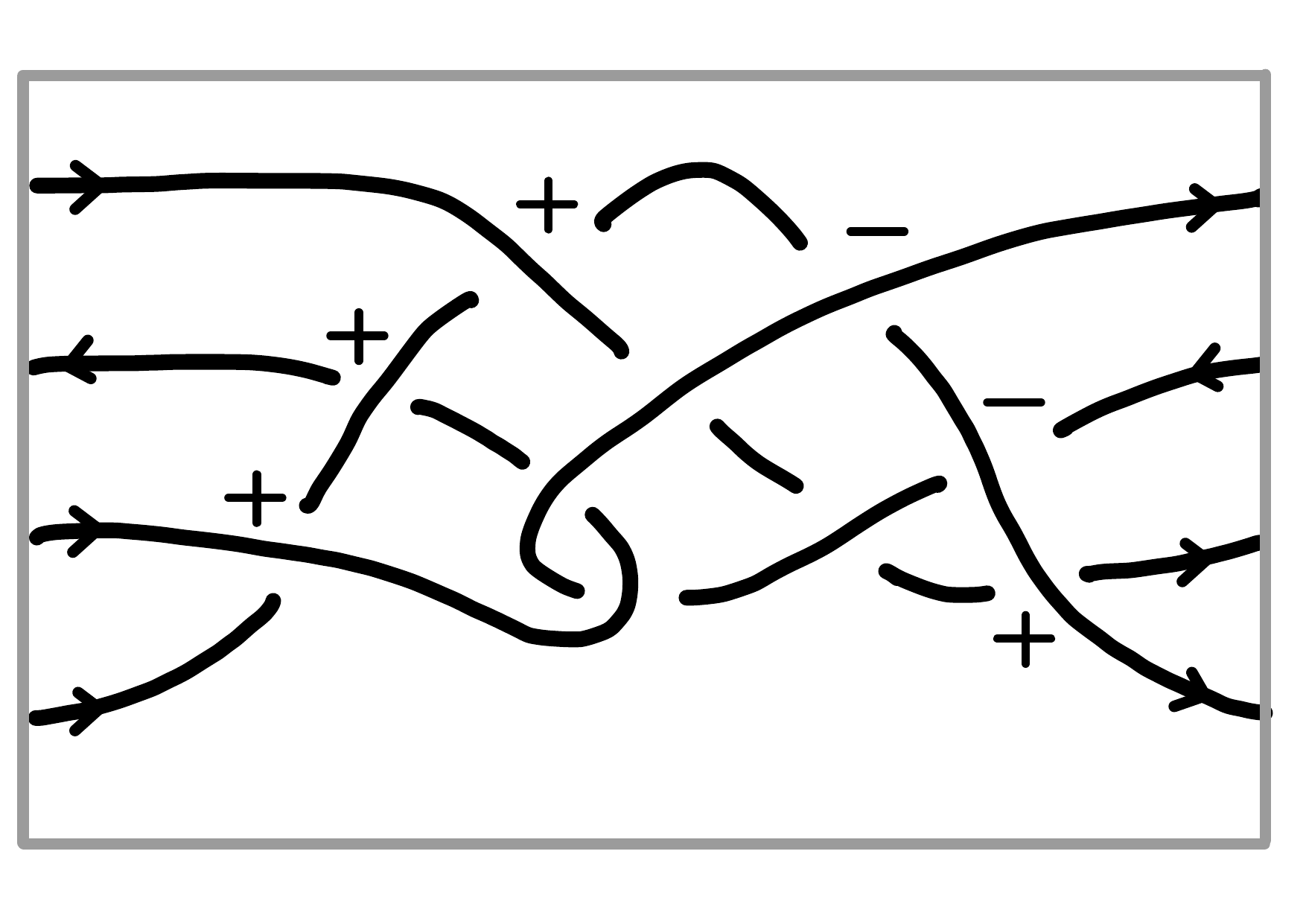} 
 \put(-125,59){$C_2$}\put(-125,15){$C_1$}
  \caption{A diagram calculation of the linking number for a singly periodic tangle}
  \label{fig:singly_exa2} 
\end{figure}

\subsection{Doubly periodic tangle}

Let $L$ be an oriented link in the thickened torus $T^{2} \times I$ that contains two components $K_{1}$ and $K_{2}$. 
We obtain a diagram of $L$ on $T^{2}$ in general position by the standard projection. 
The \emph{linking number} of $K_{1}$ and $K_{2}$, denoted by $\lk(K_{1}, K_{2})$, is defined to be half of the sum of the signs of crossings of $K_{1}$ with $K_{2}$. 
Since the Reidemeister moves and self-crossing changes do not change $\lk(K_{1}, K_{2})$, it is invariant under link-homotopy. 
Note that $\lk(K_{1}, K_{2}) \in \frac{1}{2}\bbZ$ is not necessarily an integer 
because the number of crossings may be odd. 

Let $C_{1}$ and $C_{2}$ be two oriented components of a doubly periodic tangle $\tau$.  
Similarly to the case for a singly periodic tangle, 
the \emph{linking number} of $C_{1}$ and $C_{2}$, denoted by $\lk(C_{1}, C_{2})$, is defined as follows. 
Let $L$ be a motif of $\tau$. 
Suppose that $C_{1}$ and $C_{2}$ is respectively projected to components $K_{1}$ and $K_{2}$ of $L$. 
We say that the motif $L$ is \emph{good} for $C_{1}$ and $C_{2}$ 
if there is a fundamental domain $F$ for $L$ that satisfies the following conditions: 
\begin{itemize}
\item $F = P \times I$ for a parallelogram $P \subset \bbR^{2}$. 
\item The opposite pairs of edges of the parallelogram $P$ are glued via translations by elements that form a basis of $\pi_{1} (T^{2} \times I) \cong \bbZ^{2}$. 
\item The interior of $F$ contains the closed components of $C_{1}$ and $C_{2}$. 
\item If $C_{i}$ for $i = 1,2$ is an open component, then 
\begin{itemize}
\item the homology class $[K_{i}] \in H_{1}(T^{2} \times I, \bbZ) \cong \bbZ^{2}$ is a primitive element, and 
\item the interior of the union $\bigcup_{g \in G_{i}} g(F)$ contains $C_{i}$, 
where $G_{i}$ is the subgroup of $\pi_{1} (T^{2} \times I) \cong H_{1}(T^{2} \times I, \bbZ) \cong \bbZ^{2}$ generated by $[K_{i}]$. 
\end{itemize}
\end{itemize}
Then the components $K_{1}$ and $K_{2}$ are necessarily distinct. 
If $C_{i}$ is open, an opposite pair of edges of $P$ is glued via the translation by $[K_{i}]$. 
Moreover, if $C_{1}$ and $C_{2}$ are open components with distinct directions, 
then the homology classes $[K_{1}], [K_{2}] \in H_{1}(T^{2} \times I, \bbZ) \cong \bbZ^{2}$ form a basis. 

\begin{lem}
\label{lem:good_dp}
Let $C_{1}$ and $C_{2}$ be two oriented components of a doubly periodic tangle $\tau$. 
Let $L$ be a motif of $\tau$. 
Then $L$ has a finite cover that is good for $C_{1}$ and $C_{2}$. 
\end{lem}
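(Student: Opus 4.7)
I would model the proof on that of Lemma~\ref{lem:good_sp}, organizing it into three stages: a primitivity stage that passes to a cover making the homology classes of the open components primitive, an enlargement stage that scales this cover so that the fundamental parallelogram becomes as large as we wish, and a positioning stage that translates the parallelogram to satisfy the remaining conditions.

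For the primitivity stage, whenever $C_i$ is open, write $[K_i] = n_i v_i$ with $v_i \in \bbZ^2$ primitive and $n_i \geq 1$. Depending on how many of the components are open and whether the open ones share a common direction, I would choose an explicit finite-index sublattice $\Lambda_0 \subset \bbZ^2$: the full lattice $\bbZ^2$ if both components are closed; $\langle n_1 v_1, u \rangle$ (for any $u$ completing $v_1$ to a basis of $\bbZ^2$) if only $C_1$ is open; $\langle \lcm(n_1, n_2) v, u \rangle$ if both are open with a common direction $v$; and $\langle n_1 v_1, n_2 v_2 \rangle$ if both are open with distinct directions. In each case one checks directly that the smallest positive multiple of $[K_i]$ lying in $\Lambda_0$ appears as a basis vector of $\Lambda_0$, hence is primitive, so the preimage loop of $K_i$ in the cover defined by $\Lambda_0$ has primitive class.

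Next, I would replace $\Lambda_0$ by $\Lambda := N \Lambda_0$ for a large positive integer $N$. Uniform scaling preserves the primitivity established above, since the preimage class simply scales by $N$ to a basis vector of $\Lambda$, while enlarging a fundamental parallelogram of $\Lambda$ by a factor of $N$. I would then take $P$ to be a parallelogram spanned by a basis of $\Lambda$, translated suitably in $\bbR^2$, so that $F = P \times I$ gives the desired good fundamental domain.

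The main obstacle I expect is the positioning step: showing that a single translate of $P$ can simultaneously place the closed components of $C_1$ and $C_2$ in the interior of $F$ and each open $C_i$ in the interior of the strip $\bigcup_{g \in G_i} g(F)$. The closed components are compact, so for $N$ large enough any appropriate translate of $P$ contains them in its interior. For an open $C_i$, the curve is periodic with period equal to the primitive class of the preimage loop of $K_i$ in $\Lambda$, and $K_i$ lies in some fundamental domain of $X$, so $C_i$ has bounded transverse deviation from the line through the origin in direction $v_i$; meanwhile the strip $\bigcup_{g \in G_i} g(F)$ has transverse width of order $N$. Hence each strip condition becomes looser as $N$ grows, and for $N$ sufficiently large a common translate of $P$ meets all the conditions at once.
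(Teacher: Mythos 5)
Your proposal is correct and follows essentially the same route as the paper's proof: pass to a cover making the relevant homology classes primitive, enlarge the fundamental domain, and position it so that the closed components lie in its interior and each open component lies in its strip $\bigcup_{g \in G_i} g(F)$. The paper's version is terser (it simply invokes the argument of Lemma~\ref{lem:good_sp} plus a ``sufficiently large finite cover'' for the strip condition), whereas you make explicit the sublattice choices and the simultaneous-positioning step, which is a faithful elaboration rather than a different method.
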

\begin{proof}
We argue in the same manner as in the proof of Lemma~\ref{lem:good_sp}. 
Since the closed components of $C_{1}$ and $C_{2}$ are compact, 
they are contained in a fundamental domain for a sufficiently large finite cover of $L$. 
If the homology class of $K_{i}$ is $n$ times a primitive element in $H_{1}(T^{2} \times I, \bbZ)$, 
then that of the preimage of $K_{i}$ by an $n$-sheeted covering map is a primitive element. 
If the homology class of $K_{i}$ is a primitive element in $H_{1}(T^{2} \times I, \bbZ)$, 
then that of the preimage of $K_{i}$ by any finite covering map is also a primitive element. 
Moreover, we can take a fundamental domain $F$ for a sufficiently large finite cover of $L$ 
so that an open component $C_{i}$ is contained in $\bigcup_{g \in G_{i}} g(F)$. 
Hence $L$ has a finite cover that is a good motif. 
\end{proof}

\begin{dfn}
Let $L$ be a good motif of a doubly periodic tangle $\tau$ for oriented components $C_{1}$ and $C_{2}$. 
Let $K_{1}$ and $K_{2}$ be as above. 
If $C_{1}$ and $C_{2}$ are open components with a common direction, 
we define $\lk(C_{1}, C_{2}) \in \{ 0, \pm \infty \}$ 
so that the sign coincides with that of $\lk(K_{1}, K_{2})$. 
Otherwise, we define $\lk(C_{1}, C_{2}) = \lk(K_{1}, K_{2}) \in \frac{1}{2}\bbZ$. 
\end{dfn}

\begin{ex}
The left of Figure~\ref{fig:doubly_exa1} indicates motifs of doubly periodic tangles, 
which are not good for any pair of components. 
The right of Figure~\ref{fig:doubly_exa1} indicates good motifs for components $C_{1}$ and $C_{2}$. 
The right motifs are finite covers of the left motifs. 
The linking number $\lk(C_{1}, C_{2})$ of the first example is $+ \infty$, and that of the second is $1/2$. 
Note that the linking numbers cannot be well computed using the left motifs. 
\end{ex}

    \begin{figure}[h]
  \centering
\includegraphics[ width=0.6\textwidth]{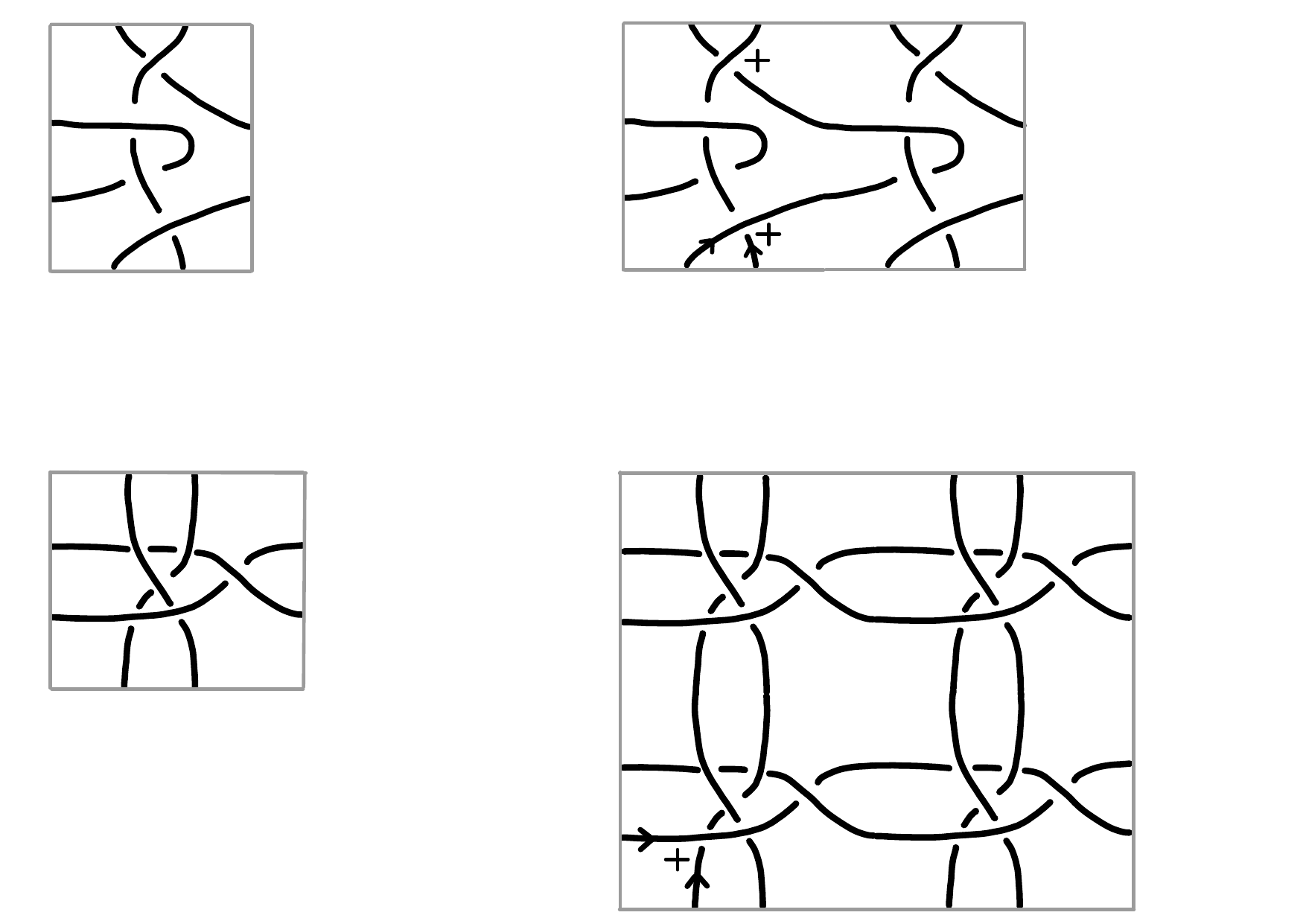} 
 \put(-108,97){$C_1$}\put(-94,97){$C_2$}\put(-128,12){$C_1$}\put(-105,-7){$C_2$}
  \caption{Non-good and good motifs of doubly periodic tangles}
  \label{fig:doubly_exa1} 
\end{figure}

\begin{thm}
\label{thm:lk_dp}
Let $C_{1}$ and $C_{2}$ be two oriented components of a doubly periodic tangle $\tau$. 
Then the linking number $\lk(C_{1}, C_{2})$ does not depend on the choice of a good motif. 
Moreover, if there is a link-homotopy from $\tau$ to a doubly periodic tangle $\tau'$ which maps $C_{1}$ and $C_{2}$ respectively to components $C'_{1}$ and $C'_{2}$, 
then $\lk(C_{1}, C_{2}) = \lk(C'_{1}, C'_{2})$. 
In other words, the linking number is well-defined for a link-homotopy class of a doubly periodic tangle. 
\end{thm}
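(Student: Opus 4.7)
The plan is to follow the pattern of the proof of Theorem~\ref{thm:lk_sp}. Link-homotopy invariance reduces to well-definedness together with the link-homotopy invariance of $\lk$ for links in $T^{2}\times I$: a periodic link-homotopy between $\tau$ and $\tau'$ descends, after passing to a common good motif, to an ordinary link-homotopy in $T^{2}\times I$. I therefore focus on showing independence of the good motif.

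By Proposition~2.4 of \cite{KMMY25}, any two motifs of $\tau$ share a common finite cover, and applying Lemma~\ref{lem:good_dp} to this common cover (possibly passing to a further finite cover) produces a common cover that is itself good. It therefore suffices to compare $\lk(C_1, C_2)$ computed from a good motif $L$ with the value computed from a good finite cover $L'$ of $L$; let $K_i, K'_i$ denote the images of $C_i$ in $L, L'$. I argue by three cases.

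\emph{Case 1: at least one of $C_1, C_2$ is closed.} The closed component lies in the interior of the fundamental domain $F = P \times I$, so every crossing of $K_1$ with $K_2$ on $T^{2}$ lies in the image of $F$. Only one lift of the closed $K_i$ (the one containing $C_i$) is $K'_i$, and the crossings of $K'_1$ with $K'_2$ are in sign-preserving bijection with those of $K_1$ with $K_2$, so $\lk(K'_1, K'_2) = \lk(K_1, K_2)$. \emph{Case 2: both open with a common direction.} Here $\lk(C_1, C_2) \in \{0, \pm \infty\}$ depends only on the sign of $\lk(K_1, K_2)$; a direct crossing count shows $\lk(K'_1, K'_2)$ is a positive integer multiple of $\lk(K_1, K_2)$, preserving the sign. \emph{Case 3: both open with distinct directions.} The remark after the good-motif definition forces $[K_1], [K_2]$ to form a basis of $\pi_1(T^{2}\times I) \cong \bbZ^{2}$ and $[K'_1], [K'_2]$ to form a basis of the sublattice $\Lambda\subset\bbZ^{2}$ corresponding to $L'\to L$; therefore $\Lambda = \bbZ[K'_1]\oplus\bbZ[K'_2]$, so that $L'\to L$ is diagonal of bidegree $(n_1, n_2)$ in the $\{[K_1],[K_2]\}$-basis with total degree $n = n_1 n_2$. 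The deck group $G \cong \bbZ/n_1 \oplus \bbZ/n_2$ acts transitively on $\pi^{-1}(K_i)$ with stabilizer $H_i$ of order $n_i$ generated by the image of $[K_i]$; since $H_1 \cap H_2 = 0$, the $G$-action on $\pi^{-1}(K_1) \times \pi^{-1}(K_2)$ is free and transitive, so the $n$ lifts of each base crossing are distributed one per pair of components. In particular, exactly one lift lies on $(K'_1, K'_2)$, giving $\lk(K'_1, K'_2) = \lk(K_1, K_2)$.

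The main delicacy is Case 3: the key is to apply the good-motif hypothesis in its strong form—that $[K'_1]$ and $[K'_2]$ are the gluing translations of the fundamental parallelogram—to force the cover $L'\to L$ to be diagonal in the $\{[K_1],[K_2]\}$-basis. Without this, a non-diagonal sublattice such as $\langle 2[K_1],\,[K_1]+[K_2]\rangle$ would give a cover in which $\lk(K'_1, K'_2)$ scales by the cover degree, contradicting well-definedness. Once diagonality is secured, the orbit-counting argument runs routinely.
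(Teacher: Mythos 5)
Your proof is correct and follows essentially the same route as the paper: reduce link-homotopy invariance to the invariance of $\lk$ for links in $T^{2}\times I$, use Lemma~\ref{lem:good_dp} to produce a common good finite cover of two good motifs, and then compare a good motif $L$ with a good finite cover $L'$ case by case (closed component present, common direction, distinct directions). The only divergence is in the distinct-directions case, where the paper argues geometrically that all crossings of $K'_1$ with $K'_2$ lie in the image of the fundamental domain $F$ (via the tubes $\bigcup_{g\in G_i}g(F)$), while you obtain the same count from the free and transitive deck-group action on pairs of lifted components; both arguments rest on the same point you correctly isolate, namely that goodness of $L'$ forces the covering lattice to be diagonal in the $\{[K_1],[K_2]\}$-basis.
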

\begin{proof}
Note that a finite cover of a good motif is not necessarily good. 
For instance, 
if $C_{1}$ and $C_{2}$ are open components with distinct directions, 
the homology classes $[K_{1}], [K_{2}] \in H_{1}(T^{2} \times I, \bbZ) \cong \bbZ^{2}$ form a basis for a good motif. 
Hence a good finite cover of this motif has a basis consisting of $m[K_{1}]$ and $n[K_{2}]$ for $m, n \geq 1$. 
Nonetheless, Lemma~\ref{lem:good_dp} implies that two good motifs of $\tau$ have a common good finite cover. 
The second assertion follows from the first assertion and the fact that 
the linking number of a link in $T^{2} \times I$ is invariant under link-homotopy. 

To show the first assertion, let $L'$ be a good finite cover of a good motif $L$ of $\tau$. 
Suppose that $K_{i}$ and $K'_{i}$ for $i =1,2$ are the images of $C_{i}$ respectively in $L$ and $L'$. 
Let $F$ be a fundamental domain for $L$ that satisfies the conditions for a good motif. 
The knot $K'_{i}$ is a component of the preimage of $K_{i}$ by the finite covering map. 
If $C_{i}$ is open, then the image of $\bigcup_{g \in G_{i}} g(F)$ contains $K'_{i}$ and disjoint from the other components of the preimage of $K_{i}$, where $G_{i}$ is as above. 
If $C_{1}$ and $C_{2}$ are open components with a common direction, then $\lk(K'_{1}, K'_{2})$ is a positive multiple of $\lk(K_{1}, K_{2})$. 
Otherwise the crossings of $K'_{1}$ and $K'_{2}$ are contained in the image of $F$, 
and so $\lk(K'_{1}, K'_{2}) = \lk(K_{1}, K_{2})$. 
Hence the linking number $\lk(C_{1}, C_{2})$ is well-defined. 
\end{proof}

If $C_{1}$ and $C_{2}$ are open components with distinct directions, 
the linking number $\lk(C_{1}, C_{2})$ is the sum of an integer and $1/2$. 
Otherwise $C_{1}$ and $C_{2}$ are contained in a singly periodic tangle. 
For example, Figures~\ref{fig:doubly_exa2}, \ref{fig:doubly_exa3} indicate doubly periodic tangles and components $C_{1}$ and $C_{2}$. 
In Figure~\ref{fig:doubly_exa2}, $C_{1}$ and $C_{2}$ form a singly periodic tangle. 
In Figure~\ref{fig:doubly_exa3}, $C_{1}$ and translational images of $C_{2}$ form a singly periodic tangle. 

    \begin{figure}[h]
  \centering
\includegraphics[ width=0.4\textwidth]{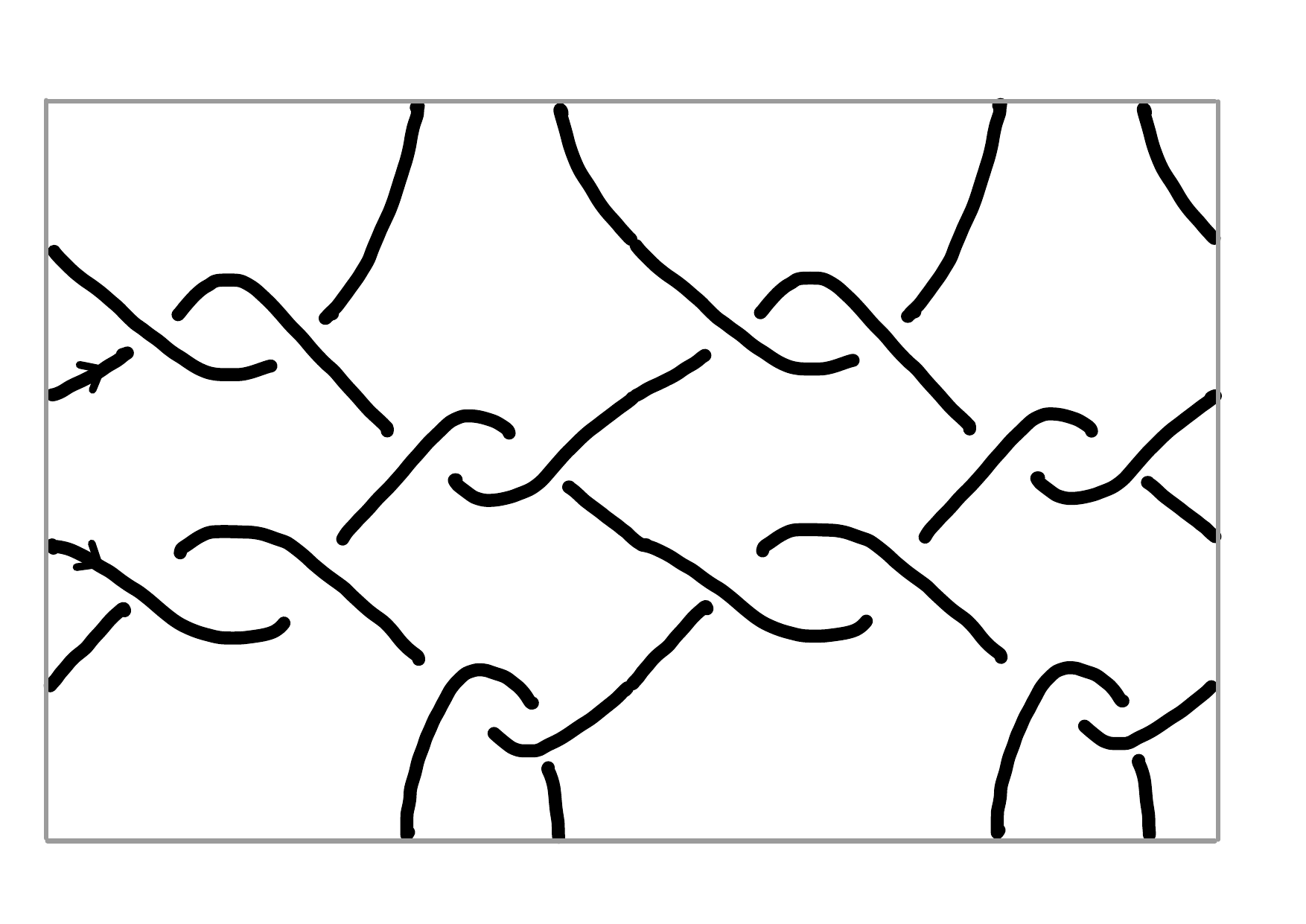} 
 \put(-155,57){$C_2$}\put(-155,40){$C_1$}
  \caption{$C_{1}$ and $C_{2}$ form a singly periodic tangle.}
  \label{fig:doubly_exa2} 
\end{figure}
    \begin{figure}[h]
  \centering
\includegraphics[ width=0.4\textwidth]{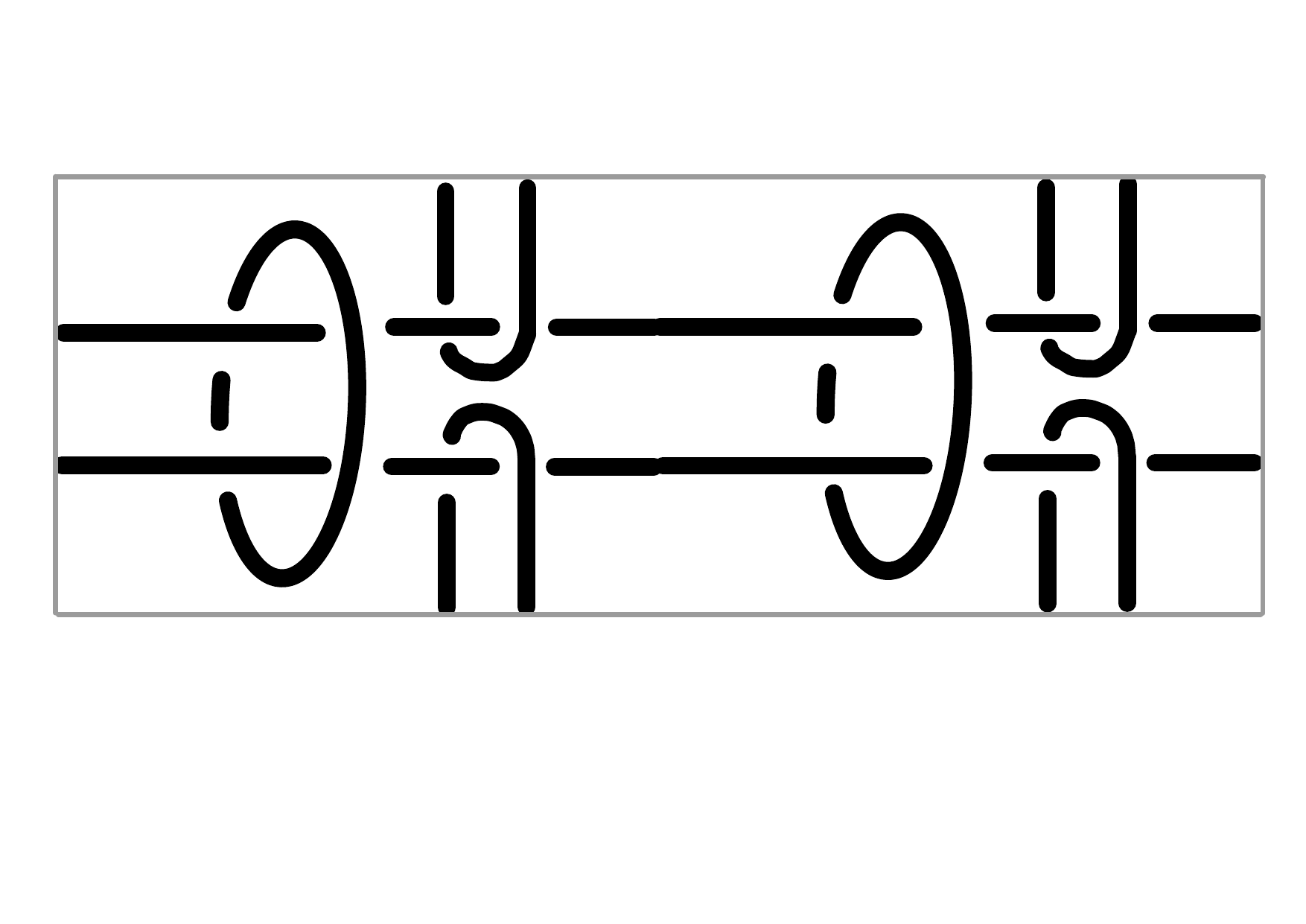} 
 \put(-155,47){$C_1$}\put(-118,20){$C_2$}
  \caption{$C_{1}$ and translational images of $C_{2}$ form a singly periodic tangle.}
  \label{fig:doubly_exa3} 
\end{figure}

\begin{prop}
\label{prop:sp_in_dp}
Let $C_{1}$ and $C_{2}$ be two oriented components of a doubly periodic tangle $\tau$. 
Unless $C_{1}$ and $C_{2}$ are open components with distinct directions, 
there are 
\begin{itemize}
\item a singly periodic tangle $\tau_{0} \subset \bbR \times D^{2}$ 
preserved by a translational action $\rho \colon \bbZ \to \Isom(\bbR \times D^{2})$, 
\item an orientation-preserving embedding $\iota \colon \bbR \times D^{2} \to \bbR^{2} \times I$, and 
\item components $C_{1,0}$ and $C_{2,0}$ of $\tau_{0}$ 
\end{itemize}
such that 
\begin{itemize}
\item $\rho$ is a restriction of a translational action for $\tau$, 
\item $\iota (\tau_{0}) \subset \tau$, and 
\item $\iota (C_{i,0}) = C_{i}$ for $i = 1, 2$. 
\end{itemize}
Then we shortly say that $\tau$ contains $\tau_{0}$, and $\tau_{0}$ contains $C_{1}$ and $C_{2}$. 
Moreover, the linking number $\lk(C_{1}, C_{2})$ in $\tau$ coincides with $\lk(C_{1,0}, C_{2,0})$ in $\tau_{0}$. 
\end{prop}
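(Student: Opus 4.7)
The plan is to construct $\tau_0$, $\iota$, and $\rho$ explicitly from a good motif of $\tau$, then establish the linking-number identity by a bijection of crossings in the motif diagrams on $T^{2}$ and $S^{1} \times I$.

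First, by Lemma~\ref{lem:good_dp} I take a good motif $L$ of $\tau$ for $C_1, C_2$ with fundamental domain $F = P \times I$, writing $K_i$ for the image of $C_i$ in $L$. The hypothesis (not both open with distinct directions) lets me select a primitive $v \in \pi_{1}(T^{2} \times I) \cong \bbZ^{2}$ with $[K_i] \in \{0, \pm v\}$ for $i = 1, 2$: set $v = [K_1]$ if $C_1$ is open, and take any primitive $v$ if both $C_i$ are closed. After replacing $L$ by a suitable finite cover corresponding to a sublattice of the form $\bbZ v + n \bbZ u$ (with $\{v, u\}$ a basis of $\bbZ^{2}$ and $n \gg 0$), I may assume that the good motif conditions hold with $P$ having $v$ and $nu$ as edges. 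Set $S := \bigcup_{m \in \bbZ} T_{mv}(P) \subset \bbR^{2}$, a closed strip parallel to $v$, and $N := S \times I \subset \bbR^{2} \times I$; then $N$ is $T_v$-invariant and homeomorphic to $\bbR \times D^{2}$. Define $\rho$ as integer translation of the $\bbR$-factor on $\bbR \times D^{2}$, and fix an orientation-preserving $T_v$-equivariant homeomorphism $\iota \colon \bbR \times D^{2} \to N \hookrightarrow \bbR^{2} \times I$.

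By the good motif conditions, closed components among $C_1, C_2$ lie in the interior of $F \subset N$, and open components (with homology class $\pm v$) lie in the interior of $\bigcup_{m} T_{mv}(F) = N$. Hence $C_1, C_2 \subset N$, so $\tau_0 := \iota^{-1}\bigl(\bigcup_{m \in \bbZ} T_{mv}(C_1 \cup C_2)\bigr) \subset \bbR \times D^{2}$ is a $\rho$-invariant $1$-submanifold---a singly periodic tangle---and setting $C_{i, 0} := \iota^{-1}(C_i)$ gives the required identities $\iota(\tau_0) \subset \tau$ and $\iota(C_{i, 0}) = C_i$. Write $\bar{K}_{i, 0}$ for the image of $C_{i, 0}$ in the motif of $\tau_0$ in $S^{1} \times D^{2} = N/T_v$.

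For the linking-number equality, the inclusion $S \hookrightarrow \bbR^{2}$ descends to a map $\psi_0 \colon S/T_v \to \bbR^{2}/\bbZ^{2} = T^{2}$ that is a local homeomorphism on the interior and identifies the two boundary circles. Since $S$ and the adjacent strip $T_{nu}(S)$ share only a boundary line, each crossing of $K_1$ and $K_2$ in $T^{2}$ lifts generically to a unique crossing in $\bbR^{2}$ with both strands in $S$---necessarily between $C_1$ and some $T_{mv}(C_2)$ after normalizing by $\bbZ^{2}$. Modulo $T_v$ these are in bijection, via $\psi_0$, with the crossings of $\bar{K}_{1, 0}$ and $\bar{K}_{2, 0}$ in the motif diagram on $S^{1} \times I$ (both diagrams obtained by forgetting the $I$-factor of $\bbR^{2} \times I$), with signs preserved because $\iota$ is orientation-preserving and respects the $I$-factor. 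Hence $\lk(K_1, K_2) = \lk(\bar{K}_{1, 0}, \bar{K}_{2, 0})$, and applying the definitions case by case: when $C_1, C_2$ are not both open with common direction, one directly obtains $\lk(C_1, C_2) = \lk(C_{1, 0}, C_{2, 0})$; when both are open with common direction, both linking numbers lie in $\{0, \pm \infty\}$ with signs determined by $\lk(K_1, K_2)$ and $\lk(\bar{K}_{1, 0}, \bar{K}_{2, 0})$ respectively, and so agree.

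The main obstacle is the finite-cover step: the proof of Theorem~\ref{thm:lk_dp} already observes that finite covers of good motifs are not automatically good, so one must verify carefully that the sublattice $\bbZ v + n \bbZ u$ with large $n$ yields a good motif whose fundamental domain has $v$ as an edge. A secondary concern is to justify the sign-preserving bijection of crossings under $\psi_0$, which relies on placing all crossings generically in the interior of $P$, away from the shared boundary lines of adjacent strips.
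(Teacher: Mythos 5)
Your construction is essentially the paper's own proof: the paper likewise takes a good motif with fundamental domain $F$, lets $G\cong\bbZ$ be generated by an edge-translation of $P$ containing $[K_1]$ and $[K_2]$, realizes $\bbR\times D^2$ as the strip $\bigcup_{g\in G}g(F)$, defines $\tau_0$ as the union of $G$-translates of $C_1\cup C_2$, and deduces the linking-number identity from compatibility of the diagram projections. Your version only adds detail the paper leaves implicit (the crossing bijection and the sign check), and your worry about the finite-cover step is moot since the good-motif conditions already force $[K_i]$ to be an edge vector of $P$.
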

\begin{proof}
Take a good motif $L$ of $\tau$ for $C_{1}$ and $C_{2}$. 
Let $F$ be a fundamental domain for $L$ as above. 
Let $G$ be a subgroup of $\pi_{1} (T^{2} \times I) \cong \bbZ^{2}$ 
generated by an element that translates a face of $F$ to the opposite face. 
Unless $C_{1}$ and $C_{2}$ are open components with distinct directions, 
we may assume that $G$ contains the classes $[K_{1}]$ and $[K_{2}]$. 
If $C_{1}$ and $C_{2}$ are open components with a common direction, 
they are preserved by the translational action of $G$. 
Hence $C_{1} \cup C_{2}$ is an image of a singly periodic tangle $\tau_{0}$ by an embedding $\iota \colon \bbR \times D^{2} \to \bbR^{2} \times I$ with $\iota (\bbR \times D^{2}) \subset \bigcup_{g \in G} g(F)$. 
If $C_{1}$ or $C_{2}$ is closed, 
$\bigcup_{g \in G} g(C_{1}) \cup g(C_{2})$ is an image of a singly periodic tangle $\tau_{0}$. 
The coincidence of linking numbers follows from the fact that the projections to diagrams are compatible. 
\end{proof}

\subsection{Triply periodic tangle}

The linking number for a link in $T^{3}$ is not defined at present. 
We define the linking number of two components in a triply periodic tangle 
using a doubly periodic tangle that contains these two components. 
The following proposition holds similarly to Proposition~\ref{prop:sp_in_dp}. 

\begin{prop}
\label{prop:dp_in_tp}
Let $C_{1}$ and $C_{2}$ be components of a triply periodic tangle $\tau$. 
Then there are 
\begin{itemize}
\item a doubly periodic tangle $\tau_{0} \subset \bbR^{2} \times I$ 
preserved by a translational action $\rho \colon \bbZ^{2} \to \Isom(\bbR^{2} \times I)$, 
\item an orientation-preserving embedding $\iota \colon \bbR^{2} \times I \to \bbR^{3}$, and 
\item components $C_{1,0}$ and $C_{2,0}$ of $\tau_{0}$ 
\end{itemize}
such that 
\begin{itemize}
\item $\rho$ is a restriction of a translational action for $\tau$, 
\item $\iota (\tau_{0}) \subset \tau$, and 
\item $\iota (C_{i,0}) = C_{i}$ for $i = 1, 2$. 
\end{itemize}
Then we shortly say that $\tau$ contains $\tau_{0}$, and $\tau_{0}$ contains $C_{1}$ and $C_{2}$. 
\end{prop}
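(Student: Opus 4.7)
The plan is to mimic the proof of Proposition~\ref{prop:sp_in_dp}: take a motif $L$ of $\tau$ in $T^{3}$ with a parallelepiped fundamental domain $F \subset \bbR^{3}$ spanned by a basis $\{g_{1}, g_{2}, v\}$ of $\pi_{1}(T^{3}) \cong \bbZ^{3}$ chosen so that the rank-$2$ subgroup $G = \bbZ g_{1} \oplus \bbZ g_{2}$ contains $[K_{1}]$ and $[K_{2}]$ (where $K_{i}$ is the image of $C_{i}$ in $L$), and use the slab $S = \bigcup_{g \in G} g(F) \subset \bbR^{3}$ as the image of the desired embedding $\iota \colon \bbR^{2} \times I \to \bbR^{3}$.

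To produce such a basis, I would set $G_{0} = \bbZ [K_{1}] + \bbZ [K_{2}] \subset H_{1}(T^{3}, \bbZ) \cong \bbZ^{3}$. Since it is generated by two elements, its rank is at most $2$, so its saturation in $\bbZ^{3}$, enlarged to any rank-$2$ direct summand if its rank is smaller, gives a primitive rank-$2$ subgroup $G$ containing $G_{0}$. A complementary primitive $v \in \bbZ^{3}$ with appropriate sign completes a positively oriented basis $\{g_{1}, g_{2}, v\}$ of $\bbZ^{3}$ with $G = \bbZ g_{1} \oplus \bbZ g_{2}$.

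Next, I would pass to a sufficiently large finite cover of $L$ in the $v$-direction and translate $F$ so that $C_{1} \cup C_{2}$ lies strictly inside the interior of $S$. This is the key geometric step: since the stabilizer $H_{i}$ of $C_{i}$ under the $\bbZ^{3}$-action is contained in $G$ by construction, the $v$-coordinate is constant along each $H_{i}$-orbit in $C_{i}$, so the set of $v$-coordinates attained on $C_{i}$ is bounded, and a sufficiently long cover in the $v$-direction lets this bounded range fit strictly inside a single fundamental $v$-interval. Then I would take $\iota \colon \bbR^{2} \times I \to \bbR^{3}$ to be the orientation-preserving affine embedding with image $S$ that intertwines the $G$-action on $S$ with the standard $\bbZ^{2}$-translation on $\bbR^{2} \times I$, and let $\rho \colon \bbZ^{2} \to \Isom(\bbR^{2} \times I)$ be this $\bbZ^{2}$-translation; by construction, $\rho$ is a restriction of a translational action for $\tau$.

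Finally, I would set $\tau_{0} = \iota^{-1}\bigl(\bigcup_{g \in G} g(C_{1} \cup C_{2})\bigr)$ and $C_{i,0} = \iota^{-1}(C_{i})$. Since distinct components of $\tau$ have disjoint $\bbZ^{3}$-translates, the union $\bigcup_{g \in G} g(C_{1} \cup C_{2})$ is a disjoint union of translates of $C_{1}$ and $C_{2}$ contained in the interior of $S$, so $\tau_{0}$ is a $\rho(\bbZ^{2})$-invariant $1$-submanifold of the interior of $\bbR^{2} \times I$ whose quotient by $\rho$ is the compact link $K_{1} \sqcup K_{2} \subset T^{2} \times I$; hence $\tau_{0}$ is a doubly periodic tangle, and the properties $\iota(\tau_{0}) \subset \tau$ and $\iota(C_{i,0}) = C_{i}$ hold by construction. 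The main obstacle is the finite-cover step confining $C_{1} \cup C_{2}$ to the interior of the slab; with that arranged, the rest is a direct verification following Proposition~\ref{prop:sp_in_dp}.
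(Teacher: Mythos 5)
Your proposal is correct and follows essentially the same route as the paper: choose a parallelepiped fundamental domain adapted to a basis of $\pi_{1}(T^{3})\cong\bbZ^{3}$ whose first two vectors span a rank-$2$ summand $G$ containing $[K_{1}]$ and $[K_{2}]$, pass to a large enough finite cover so that $C_{1}\cup C_{2}$ sits inside the slab $\bigcup_{g\in G}g(F)$, and take the $G$-orbit of $C_{1}\cup C_{2}$ as the doubly periodic tangle. You merely make explicit two points the paper leaves implicit, namely the construction of $G$ as a primitive rank-$2$ subgroup via saturation and the boundedness of the transverse coordinate on $C_{i}$ coming from compactness of $C_{i}/H_{i}$.
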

\begin{proof}
Similarly to the cases for singly and doubly periodic tangles, 
there is a good motif $L \subset T^{3}$ of $\tau$ for $C_{1}$ and $C_{2}$ that satisfies the following conditions: 
\begin{itemize}
\item A parallelepiped $F \subset \bbR^{3}$ is a fundamental domain for $L$. 
\item The opposite pairs of faces of $F$ are glued via translations by elements that form a basis of $\pi_{1} (T^{3}) \cong \bbZ^{3}$. 
\item The interior of $F$ contains the closed components of $C_{1}$ and $C_{2}$. 
\item If $C_{i}$ for $i = 1,2$ is an open component, then 
\begin{itemize}
\item the homology class $[K_{i}] \in H_{1}(T^{3}, \bbZ) \cong \bbZ^{3}$ is a primitive element, and 
\item the interior of the union $\bigcup_{g \in G_{i}} g(F)$ contains $C_{i}$, 
where $G_{i}$ is the subgroup of $\pi_{1} (T^{3}) \cong H_{1}(T^{3}, \bbZ) \cong \bbZ^{3}$ generated by $[K_{i}]$. 
\end{itemize}
\end{itemize}
Let $G$ be a subgroup of $\pi_{1} (T^{3}) \cong \bbZ^{3}$ generated by two elements that translate faces of $F$ to the opposite faces. 
We may assume that $G$ contains $G_{1}$ and $G_{2}$. 
Then $\bigcup_{g \in G} g(C_{1}) \cup g(C_{2})$ is an image of a doubly periodic tangle $\tau_{0}$. 
\end{proof}

\begin{dfn}
Let $C_{1}$ and $C_{2}$ be two oriented components of a triply periodic tangle $\tau$. 
Suppose that a doubly periodic tangle $\tau_{0}$ contains $C_{1}$ and $C_{2}$. 
Then we define $\lk(C_{1}, C_{2})$ to be the linking number of $C_{1}$ and $C_{2}$ in $\tau_{0}$. 
\end{dfn}

\begin{thm}
\label{thm:lk_tp}
Let $C_{1}$ and $C_{2}$ be two oriented components of a triply periodic tangle $\tau$. 
Then the linking number $\lk(C_{1}, C_{2})$ does not depend on the choice of a doubly periodic tangle $\tau_{0}$ that contains $C_{1}$ and $C_{2}$. 
Moreover, if there is a link-homotopy from $\tau$ to a triply periodic tangle $\tau'$ which maps $C_{1}$ and $C_{2}$ respectively to components $C'_{1}$ and $C'_{2}$, 
then $\lk(C_{1}, C_{2}) = \lk(C'_{1}, C'_{2})$. 
In other words, the linking number is well-defined for a link-homotopy class of a triply periodic tangle. 
\end{thm}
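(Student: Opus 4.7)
The plan is to first establish independence of $\lk(C_{1}, C_{2})$ from the choice of doubly periodic tangle $\tau_{0}$, and then to derive link-homotopy invariance from this together with Theorem~\ref{thm:lk_dp}, mirroring the structure of the proofs of Theorems~\ref{thm:lk_sp} and~\ref{thm:lk_dp}.

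For the link-homotopy statement, given a link-homotopy $H$ from $\tau$ to $\tau'$ sending $(C_{1}, C_{2})$ to $(C'_{1}, C'_{2})$, I would express $H$ as a finite composition of periodic isotopies, affine transformations isotopic to the identity, and periodic self-crossing changes, each equivariant under some sublattice of $\bbZ^{3}$. I would choose a doubly periodic tangle $\tau_{0} \subset \tau$ containing $C_{1}, C_{2}$ whose preserving rank-$2$ sublattice $G$ lies inside the intersection of the sublattices appearing in $H$. Each step of $H$ then restricts to a corresponding $G$-equivariant operation on $\tau_{0}$, producing a link-homotopy from $\tau_{0}$ to a doubly periodic tangle $\tau'_{0} \subset \tau'$ containing $C'_{1}, C'_{2}$. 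Theorem~\ref{thm:lk_dp} yields $\lk(C_{1,0}, C_{2,0}) = \lk(C'_{1,0}, C'_{2,0})$, which by the independence assertion equals $\lk(C_{1}, C_{2}) = \lk(C'_{1}, C'_{2})$.

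For the independence assertion, let $\tau_{0}, \tau'_{0}$ be two doubly periodic tangles containing $C_{1}, C_{2}$, produced as in the proof of Proposition~\ref{prop:dp_in_tp} from good motifs $L, L' \subset T^{3}$ of $\tau$ and rank-$2$ sublattices $G, G' \subset \pi_{1}(T^{3})$. I would split according to the dichotomy of Proposition~\ref{prop:sp_in_dp}. If $C_{1}, C_{2}$ are not open components with distinct directions in $\tau$, then the same holds inside each of $\tau_{0}, \tau'_{0}$, so Proposition~\ref{prop:sp_in_dp} provides singly periodic sub-tangles $\tau_{00} \subset \tau_{0}$ and $\tau'_{00} \subset \tau'_{0}$ with $\lk(C_{1,0}, C_{2,0}) = \lk_{\tau_{00}}(C_{1}, C_{2})$ and analogously for $\tau'_{00}$. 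I would then verify that $\lk_{\tau_{00}}(C_{1}, C_{2})$ is determined by data intrinsic to $C_{1}, C_{2} \subset \bbR^{3}$ — the ordinary linking number in $\bbR^{3}$ when both are closed; the signed intersection of a compact Seifert surface of the closed component with the open one in the mixed case; or the periodic signed crossing count in a diagram projected along the common direction otherwise — none of which depend on the choice of $\tau_{00}$, so $\lk_{\tau_{00}} = \lk_{\tau'_{00}}$.

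If instead $C_{1}, C_{2}$ are open with distinct directions $v_{1}, v_{2} \in \bbZ^{3}$, then every valid sublattice $G$ must contain $\langle v_{1}, v_{2}\rangle$ and therefore lies inside the saturation $S \subset \bbZ^{3}$ of $\langle v_{1}, v_{2}\rangle$; hence $G, G'$ are rank-$2$ subgroups of $S$ and $G \cap G'$ is again rank~$2$. I would take a common finite cover of $L$ and $L'$ in $T^{3}$ via \cite[Proposition 2.4]{KMMY25} and pass to the sublattice $G \cap G'$, producing a doubly periodic tangle $\tau_{G \cap G'} \subset \tau$ whose good motif is simultaneously a finite cover of good motifs for both $\tau_{0}$ and $\tau'_{0}$. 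By the covering-space invariance established in the proof of Theorem~\ref{thm:lk_dp}, the linking numbers computed via $G$, via $G \cap G'$, and via $G'$ all agree. The main obstacle lies here: the motif of $\tau_{G \cap G'}$ sits inside a finite cover of $T^{2} \times I$ but is not the full preimage of the motif of $\tau_{0}$, since only $G \cap G'$-translates of $C_{1}, C_{2}$ (not all $G$-translates) are present. A careful bookkeeping of which lifted crossings do and do not contribute — analogous to but finer than the argument in the proof of Theorem~\ref{thm:lk_dp} — will be needed to conclude.
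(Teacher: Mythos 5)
Your overall architecture matches the paper's --- reduce to common sub-tangles of $\tau_{0}$ and $\tau'_{0}$ and invoke Proposition~\ref{prop:sp_in_dp} and Theorem~\ref{thm:lk_dp} --- but you stop exactly at the crux. In the case of open components with distinct directions you correctly observe that the motif of $\tau_{G\cap G'}$ is only a sublink of the finite cover of the motif of $\tau_{0}$, so the covering argument of Theorem~\ref{thm:lk_dp} does not apply verbatim, and you then state that ``a careful bookkeeping \dots will be needed to conclude'' without supplying it. That bookkeeping is the substance of the theorem in this case, so the proof is not complete. The missing observation is that the goodness conditions already localize everything: for a good motif with fundamental domain $F$ and strips $S_{i}=\bigcup_{g\in G_{i}}g(F)$, the translates $g(S_{1})$ for $g\in G/G_{1}$ have disjoint interiors and carry the corresponding translates of $C_{1}$, so inside $S_{1}\cap S_{2}$ (essentially $F$) the preimages of $K_{1}$ and $K_{2}$ are just $C_{1}$ and $C_{2}$ themselves. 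Hence every crossing of $K_{1}$ with $K_{2}$ is the image of one of the finitely many crossings of $C_{1}$ with $C_{2}$, and $\lk(K_{1},K_{2})$ is half the signed count of those crossings in the projection to the plane spanned by $v_{1},v_{2}$ --- a quantity depending only on $C_{1}\cup C_{2}$ and that plane, not on $G$. This is what the paper's phrase ``$\tau_{0}$ and $\tau'_{0}$ contain a common doubly periodic tangle consisting of translational images of $C_{1}$ and $C_{2}$'' is packaging.

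A second omission: you never address the ambiguity in the embedding $\iota\colon\bbR^{2}\times I\to\bbR^{3}$. Two orientation-preserving embeddings onto a neighborhood of the same $2$-plane can differ by $(x,y,z)\mapsto(-x,y,1-z)$, which reverses the projection direction and a priori changes the diagram (all crossings switched and the picture mirrored); the paper opens its proof by checking that the linking number is invariant under this ``reverse,'' and your comparison of diagrams computed in $\tau_{0}$ and $\tau'_{0}$ is not legitimate without that step. Finally, in the non-distinct-directions case you replace the paper's common singly periodic sub-tangle argument by appeals to intrinsic quantities such as an intersection number with a Seifert surface; these claims are plausible but are themselves unproven here, whereas the common-sub-tangle route needs only Proposition~\ref{prop:sp_in_dp}.
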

\begin{proof}
Note that if we replace $\tau \subset \bbR^{2} \times I$ with its image by the map $(x, y, z) \mapsto (-x, y, 1-z)$ on $\bbR^{2} \times I$, called the reverse of $\tau$, 
then the linking number of corresponding components does not change. 
From now on, we do not distinguish $\tau$ and its reverse. 
Let $\tau_{0}$ and $\tau'_{0}$ be doubly periodic tangles that contain $C_{1}$ and $C_{2}$. 
If both $C_{1}$ and $C_{2}$ are closed, then $\lk(C_{1}, C_{2})$ coincides the ordinary linking number in $\bbR^{3}$. 
If $C_{1}$ and $C_{2}$ are open components with distinct directions, 
then $\tau_{0}$ and $\tau'_{0}$ contains a common doubly periodic tangle consisting of translational images of $C_{1}$ and $C_{2}$. 
Otherwise $\tau_{0}$ and $\tau'_{0}$ contain a common singly periodic tangle consisting of translational images of $C_{1}$ and $C_{2}$. 
In all the cases, the linking number $\lk(C_{1}, C_{2})$ does not depend on the choice of $\tau_{0}$ and $\tau'_{0}$. 
In the last case, it follows from Proposition~\ref{prop:sp_in_dp}. 

For the second assertion, a doubly periodic tangle $\tau_{0}$ in $\tau$ containing $C_{1}$ and $C_{2}$ corresponds to a doubly periodic tangle $\tau'_{0}$ in $\tau'$ containing $C'_{1}$ and $C'_{2}$. 
Since $\tau_{0}$ and $\tau'_{0}$ are link-homotopic, 
the linking number is well-defined by Theorem~\ref{thm:lk_dp}. 
\end{proof}

\section{Higher order linking numbers}
\label{section:higher}

In this section, we define the higher order linking numbers for periodic tangles, as a generalization of the linking number.

\subsection{Definition of higher order linking numbers}

The higher order linking numbers (or Milnor invariants) for links are introduced by Milnor \cite{Milnor1} (also see \cite{Milnor2}) to classify the link-homotopy classes of links. 
The following is the definition of Milnor invariant in \cite{Milnor2}, which is a family of invariants (see their paper for a precise definition). 

Let $L$ be an $m$-component ordered oriented link in $S^3$. Denote by $\pi$ the fundamental group of the complement of the link $L$ in $S^3$, and by $\pi_q$ the $q$-th subgroup of the lower central series of $\pi$. It is known that $\pi/\pi_q$ is generated by $m$ elements $a_i$'s which is a meridian of the $i$-th component.  
Therefore the $i$-th longitude in $\pi/\pi_q$ is represented by a word on $\{a_1, \cdots, a_m\}$.
The Magnus expansion is the homomorphism from the free group generated by $a_1, \cdots, a_m$ to the formal power series ring in non-commutative variables $X_1,\dots,X_m$ defined by
\[
a_j \mapsto 1+X_j, \qquad 
a_j^{-1} \mapsto 1 - X_j + X_j^2 - X_j^3 + \dots,\quad 1 \leq j \leq m.
\]
The coefficient of $X_{i_1},\dots,X_{i_k}$ of the image of the $i$-th longitude in $\pi/\pi_q$ by the Magnus expansion is defined as $\mu_L(i_1 i_2 \dots i_{k} i)$ (where $q$ is chosen to be greater than $k$). 
The Milnor invariant $\overline\mu_L(i_1 i_2 \dots i_{k} i)$ is defined as the residue class of $\mu_L(i_1 i_2 \dots i_{k} i)$ modulo the greatest common divisor of all $\mu_L(J)$ such that $J$ is obtained from $i_1 i_2 \dots i_{k} i$ by removing at least one index and permuting the remaining indices cyclically.
Denote by $|J|$ the number of indices in the sequence $J$ and call it the \emph{length} of Milnor invariant. 
The residue class $\overline\mu_L(J)$ is invariant under ambient isotopy for any sequence $J$ with entries in $\{1,\cdots, m\}$. 
Moreover, it is also invariant under link-homotopy for any non-repeating sequence $J$ with entries in $\{1,\cdots, m\}$.  
Remark that Milnor invariant can be obtained by a diagram of $L$.

A {\it string link} is a pure tangle with upward orientation and without closed components, defined by Habegger–Lin \cite{HL} (see their paper for a precise definition) as illustrated in Figure~\ref{fig:stringlink}.
The stacking product, denoted by $\cdot$, of two $m$-component string links is also an $m$-component string link.
By considering the top endpoints of a string link $\sigma$ as base points, one can obtain the integer value $\mu_{\overline{\sigma}}(J)$ for its closure $\overline{\sigma}$.
It is invariant under ambient isotopy for any sequence $J$ and link-homotopy for any non-repeating sequence $J$ with respect to the boundary.
It is called the Milnor $\mu$-invariant for string links.

\begin{figure}[h]
  \centering
  \includegraphics[width=0.25\textwidth]{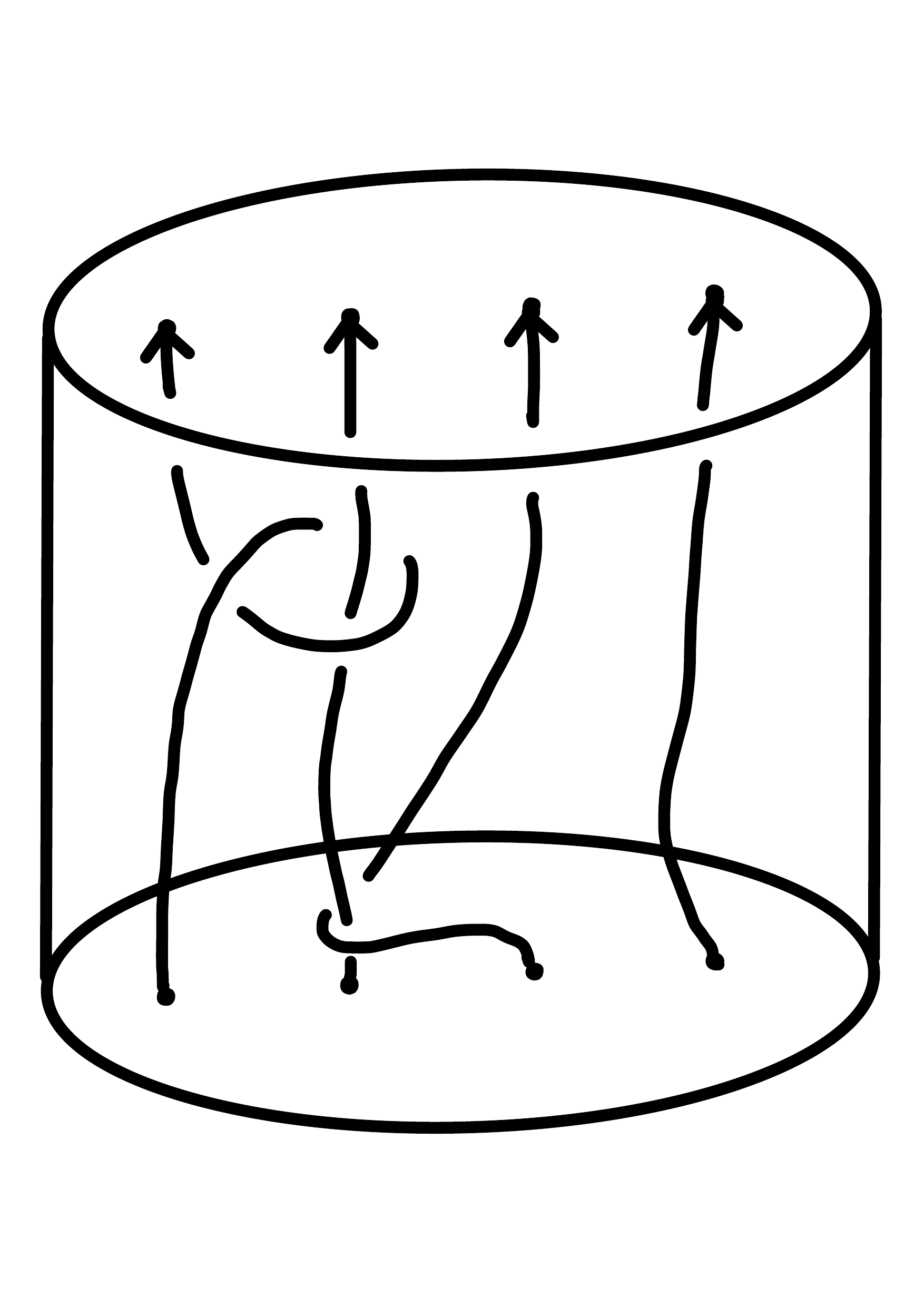} 
  \caption{A string link}
  \label{fig:stringlink} 
\end{figure}

Meilhan-Yasuhara \cite{MY} showed the following additivity property.
\begin{lem}{\cite[Lemma 3.3]{MY}}\label{additive}
    Let $\sigma$ and $\sigma'$ be two $m$-component string links such that all Milnor invariants of $\sigma$ (resp. $\sigma'$) of length $\leq l$ (resp. $\leq l'$) vanish.
Then $\mu_J(\sigma\cdot \sigma')=\mu_J(\sigma)+\mu_J(\sigma')$ for all $J$ of length $\leq m+m'$.
\end{lem}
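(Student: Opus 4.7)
The plan is to translate the claim into a calculation with the Magnus expansion of longitudes and to finish by a degree count. Let $F=F(a_1,\dots,a_m)$ be the free group on meridian generators and let $E$ denote the Magnus homomorphism into $\bbZ\langle\langle X_1,\dots,X_m\rangle\rangle$. By the definition of the Milnor invariants recalled above, if $\lambda_i(\tau)\in F/F_q$ denotes the image of the $i$-th longitude of a string link $\tau$ (for $q$ sufficiently large), then the coefficient of $X_{i_1}\cdots X_{i_k}$ in $E(\lambda_i(\tau))$ equals $\mu_\tau(i_1\cdots i_k\, i)$. The hypotheses therefore mean $E(\lambda_i(\sigma))=1+P_i$ with $P_i$ supported in degree $\geq l$, and $E(\lambda_i(\sigma'))=1+P'_i$ with $P'_i$ supported in degree $\geq l'$.

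The first step is to relate the longitudes of the stacking product to those of the factors. Fixing base points at the top and pushing meridians of $\sigma'$ upward through $\sigma$, a direct calculation using the Wirtinger presentations of the complements of $\sigma$, $\sigma'$, and $\sigma\cdot\sigma'$ yields a formula of the form
\[
\lambda_i(\sigma\cdot\sigma') = \lambda_i(\sigma)\cdot w_i\,\lambda_i(\sigma')\,w_i^{-1} \quad \text{in } F/F_q,
\]
where $w_i$ is a product of longitudes of $\sigma$ that records the transport of meridians across $\sigma$. Under the hypothesis on $\sigma$ this yields $E(w_i)=1+Q_i$ with $Q_i$ supported in degree $\geq l$.

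Now apply $E$. A short expansion gives
\[
E(w_i)\,E(\lambda_i(\sigma'))\,E(w_i)^{-1} - E(\lambda_i(\sigma')) = \bigl[E(w_i),\,E(\lambda_i(\sigma'))\bigr]\cdot E(w_i)^{-1},
\]
whose commutator $Q_i P'_i - P'_i Q_i$ is supported in degree $\geq l+l'$. Writing this difference as $R_i$, we obtain
\[
E(\lambda_i(\sigma\cdot\sigma')) = (1+P_i)(1+P'_i+R_i) = 1+P_i+P'_i+\bigl(P_iP'_i+P_iR_i+R_i\bigr),
\]
where the bracketed remainder is supported in degree $\geq l+l'$. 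Reading off the coefficient of a monomial of degree at most $l+l'-1$, equivalently a sequence $J$ with $|J|\leq l+l'$, gives the desired identity $\mu_J(\sigma\cdot\sigma')=\mu_J(\sigma)+\mu_J(\sigma')$.

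The main obstacle is the first step: identifying the conjugator $w_i$ precisely from the gluing of the two string-link complements and checking that its Magnus expansion inherits the vanishing of the low-order Milnor invariants of $\sigma$. This is essentially bookkeeping within the Habegger--Lin framework for string links; once it is in place, the remainder of the proof is pure degree counting in the non-commutative power series ring.
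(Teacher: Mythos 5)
The paper does not actually prove this lemma; it is quoted verbatim from Meilhan--Yasuhara \cite[Lemma 3.3]{MY}, so there is no in-paper argument to compare against. (Note also that the conclusion as printed should read ``for all $J$ of length $\leq l+l'$''; the ``$m+m'$'' is a typo, and your proof correctly targets the $l+l'$ bound.) Your overall strategy --- express the longitudes of $\sigma\cdot\sigma'$ in terms of those of $\sigma$ and $\sigma'$, apply the Magnus expansion, and count degrees --- is the standard route, and your final bookkeeping is right: if $E(\lambda_i(\sigma))=1+P_i$ with $P_i$ supported in degree $\geq l$, $E(\lambda_i(\sigma'))=1+P'_i$ with $P'_i$ in degree $\geq l'$, and all correction terms live in degree $\geq l+l'$, then the coefficients of monomials of degree $\leq l+l'-1$ (equivalently, sequences $J$ with $|J|\leq l+l'$) are additive.

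The gap is exactly where you locate it, and it is more than bookkeeping: the composition formula you posit, $\lambda_i(\sigma\cdot\sigma')=\lambda_i(\sigma)\cdot w_i\,\lambda_i(\sigma')\,w_i^{-1}$ with a single conjugator $w_i$, is not the correct one. Stacking actually gives $\lambda_i(\sigma\cdot\sigma')=\lambda_i(\sigma)\cdot\phi(\lambda_i(\sigma'))$ in $F/F_q$, where $\phi$ is the substitution sending each generator $a_j$ to $\lambda_j(\sigma)\,a_j\,\lambda_j(\sigma)^{-1}$: each meridian of $\sigma'$ at the interface is conjugated by the longitude of the \emph{corresponding} strand of $\sigma$, not the whole word by one common element. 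The input you take for granted is the standard fact (Stallings, Habegger--Lin) that the bottom meridians of a string link are conjugate to the top meridians precisely by the longitudes; this is what makes the hypothesis on $\sigma$ control the conjugators. With the corrected formula the lemma follows by essentially your computation: $E(\phi(a_j))=1+X_j+(\text{degree}\geq l+1)$ since $E(\lambda_j(\sigma))=1+(\text{degree}\geq l)$, so substituting into $1+P'_i$ perturbs each monomial only in degree $\geq l'-1+l+1=l+l'$, whence $E(\lambda_i(\sigma\cdot\sigma'))=1+P_i+P'_i+(\text{degree}\geq l+l')$ as you want. As written, though, the central identity of the proof is misstated and its verification deferred, so the argument is incomplete at its key step.
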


\subsection{Higher order linking numbers for periodic tangles}

Let $L$ be an $m$-component oriented link in the solid torus $S^1 \times D^2$ that contains $m$ components $K_1, K_2,\cdots, K_m$. 
Remark that we obtain a diagram of $L$ on $S^1\times I$ in general position by the standard projection. 
It can be considered a usual link diagram in $\mathbb{R}^3$.
Therefore, the Milnor invariant of a link in $S^1 \times D^2$ can be calculated by a diagram.
The {\it higher order linking number} of $K=K_1\cup\cdots\cup K_m$, denoted by $\overline{\mu}_K(J)$, is defined as Milnor invariant of $\iota(K)$ in $\mathbb{R}^3$, where $\iota : S^1 \times D^2 \rightarrow \mathbb{R}^3$ is the standard embedding.  

Let $C$ be the union of $m$ oriented components $C_1, \cdots, C_m$ of a singly periodic tangle $\tau$. 
The higher order linking number of $C$, denoted by $\overline{\mu}_C(J)$, is defined as follows. 
Let $L$ be a motif of $\tau$. Suppose that $C_i$ are respectively projected to components $K_i$ of $L$. 
If $F$ is a fundamental domain for the $\mathbb{Z}$-action on $\mathbb{R}\times D^2$ corresponding to the universal covering map from $(\mathbb{R}\times D^2, \tau)$ to $(S^1\times D^2, L)$, then we simply say that $F$ is a fundamental domain for $L$. We say that the motif $L$ is {\it good} for $C$ if there is a fundamental domain $F$ for $L$ that satisfies the following conditions:
\begin{itemize}
    \item $F=I\times D^2$ for a closed interval $I\subset R$.
    \item The interior of $F$ contains the closed components of $C$.
    \item If $C_i$ for $i=1, 2, \cdots, m$ is an open component, then the homology class $[K_i]\in H_1(S^1 \times D^2, \mathbb{Z})\cong \mathbb{Z}$ is a generator.
\end{itemize}

\begin{lem}
Let $C$ be the union of $m$ oriented components of a singly periodic tangle $\tau$. 
Let $L$ be a motif of $\tau$. 
Then $L$ has a finite cover that is good for $C$.
\end{lem}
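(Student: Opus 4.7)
The plan is to follow the same two-step strategy as in the proof of Lemma~\ref{lem:good_sp}, now applied simultaneously to all $m$ components. There are two conditions to arrange: (i) every closed component among $C_1,\dots,C_m$ must lie in the interior of a single fundamental domain, and (ii) every open component's image $K_i$ must represent a generator of $H_1(S^1 \times D^2,\bbZ) \cong \bbZ$.

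First I would handle the homology condition. For each open component $C_i$, write $[K_i] = n_i \cdot g$ where $g$ is a generator. Let $N = \lcm\{n_i : C_i \text{ open}\}$, and consider the $N$-sheeted cover $p \colon S^1 \times D^2 \to S^1 \times D^2$. Passing through the argument of Lemma~\ref{lem:good_sp}, the preimage $p^{-1}(K_i)$ of each open component is a knot whose homology class is $(N/n_i) \cdot g$; since $n_i \mid N$, this is $(N/n_i)$ times a generator. But because $K_i$ represents $n_i$ times a generator and we are passing to an $N$-sheeted cover with $n_i \mid N$, the preimage $p^{-1}(K_i)$ is actually a single connected knot whose class is a generator of $H_1$ of the covering solid torus. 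Closed components stay closed, and additional copies of closed components of $\tau$ in the cover do not interfere with the condition on $C_1,\dots,C_m$.

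Next I would take care of the closed components. Since each closed component of the $C_i$ is compact, their union is compact, and hence contained in some compact subset of $\bbR \times D^2$. By passing to a further finite cover of large enough degree (i.e., choosing a fundamental interval $I$ long enough), we can arrange that $F = I \times D^2$ contains all these closed components in its interior. As noted in the proof of Lemma~\ref{lem:good_sp}, any further finite cover of a motif whose open $K_i$'s already represent a generator continues to have this property, so taking this additional cover does not disturb condition (ii).

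The combined cover satisfies all three bullet points in the definition of good, proving that $L$ has a good finite cover for $C$. There is no genuine obstacle here beyond bookkeeping: the single-component computations from Lemma~\ref{lem:good_sp} combine without interaction because the homology condition is imposed componentwise and the compactness argument only requires a sufficiently large cover, which can always be enlarged further.
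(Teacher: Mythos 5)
Your overall strategy is exactly the paper's (the paper's proof is literally ``same as Lemma~\ref{lem:good_sp}''): pass to a cover of degree divisible by every $n_i$ to make the open components' classes primitive, then enlarge further to swallow the closed components into one fundamental domain, noting that the generator condition persists under further covers. That plan is sound. However, your justification of the homology step contains a concrete error. In the $N$-sheeted cyclic cover with $n_i \mid N$, the preimage $p^{-1}(K_i)$ is \emph{not} a single connected knot: the monodromy around $K_i$ is translation by $n_i$ in $\bbZ/N\bbZ$, so $p^{-1}(K_i)$ has $\gcd(n_i, N) = n_i$ components, each covering $K_i$ with degree $N/n_i$. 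Your claim is backwards in a way that matters: if $p^{-1}(K_i)$ \emph{were} connected, it would satisfy $p_*[K'] = N[K_i] = n_i \cdot (Ng)$, i.e.\ $[K'] = n_i g'$ where $g'$ generates $H_1$ of the covering solid torus --- $n_i$ times a generator, which would violate goodness rather than establish it. (Your preceding sentence, asserting the class is $(N/n_i)\cdot g$, is also not what comes out of the computation.)

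The conclusion is nevertheless salvageable, which is why this is a local repair rather than a change of approach: each of the $n_i$ components $K'$ of $p^{-1}(K_i)$ satisfies $p_*[K'] = (N/n_i)[K_i] = Ng = p_*(g')$, hence $[K'] = g'$ is a generator, and the image of the open component $C_i$ in the cover is one of these components. You should replace the connectivity claim with this computation (or with the paper's phrasing: an $n_i$-sheeted cover makes the relevant component primitive, and primitivity is preserved under all further finite covers). The second half of your argument, on closed components and on the compatibility of the two steps, is correct as written.
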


\begin{proof}
    It is proved in the same way as Lemma \ref{lem:good_sp}.
\end{proof}

\begin{dfn}
Let $L$ be a good motif of a singly periodic tangle $\tau$ for the union $C$ of oriented $m$ components $C_1, C_2, \cdots, C_m$ which satisfies the following conditions.
\begin{itemize}
    \item If all $C_i$ are open, all higher order linking numbers of length $\leq \lfloor \frac{m+1}{2} \rfloor$ vanish.
    \item Otherwise, all higher order linking numbers with a sequence not containing the indices of the closed components vanish.  
\end{itemize}
Let $K_i$ be as above, and let $K=K_1\cup\cdots\cup K_m$. 
Then, for any non-repeating sequence $J$ on $\{1,\cdots, m\}$, we define the order $m$ linking number $\overline{\mu}_C(J)$ as follows.
If all $C_i$ are open, we define 
\[
\overline{\mu}_C(J) =
\begin{cases}
\frac{\mu_K(J)}{\Delta_K(J)}\in\mathbb{Q}/\mathbb{Z} & \text{if } \Delta_K(J) \neq 0, \\
\infty & \text{if } \Delta_K(J) = 0 \text{ and } \overline{\mu}_K(J) > 0, \\
-\infty & \text{if } \Delta_K(J) = 0 \text{ and } \overline{\mu}_K(J) < 0 \\
0 & \text{if } \Delta_K(J) = 0 \text{ and } \overline{\mu}_K(J) = 0 
\end{cases}
\]
Otherwise we define 
$\overline{\mu}_C(J) = \overline{\mu}_K(J)$.
\end{dfn}

Remark that in particular, for the first non-vanishing case, it can be expressed in a simple form.
For any non-repeating sequence $J$, if all $C_i$ are open, 
we define $\overline{\mu}_C(J)\in\{0,\pm\infty\}$ so that the sign coincides with that of $\overline{\mu}_K(J)\in\mathbb{Z}$. 
Otherwise, $\overline{\mu}_C(J)=\overline{\mu}_K(J)\in\mathbb{Z}$.    

\begin{ex}
    The following four examples are singly periodic tangles in which all linking numbers vanish, and they can be distinguished by their triple linking numbers $\overline{\mu}{(123)}$.
    The triple linking number of the first example is $-\infty$, that of the second is $+\infty$, the third is 0, and the fourth is 1. 
    \begin{figure}[h]
  \centering
  \includegraphics[angle=90, width=0.7\textwidth]{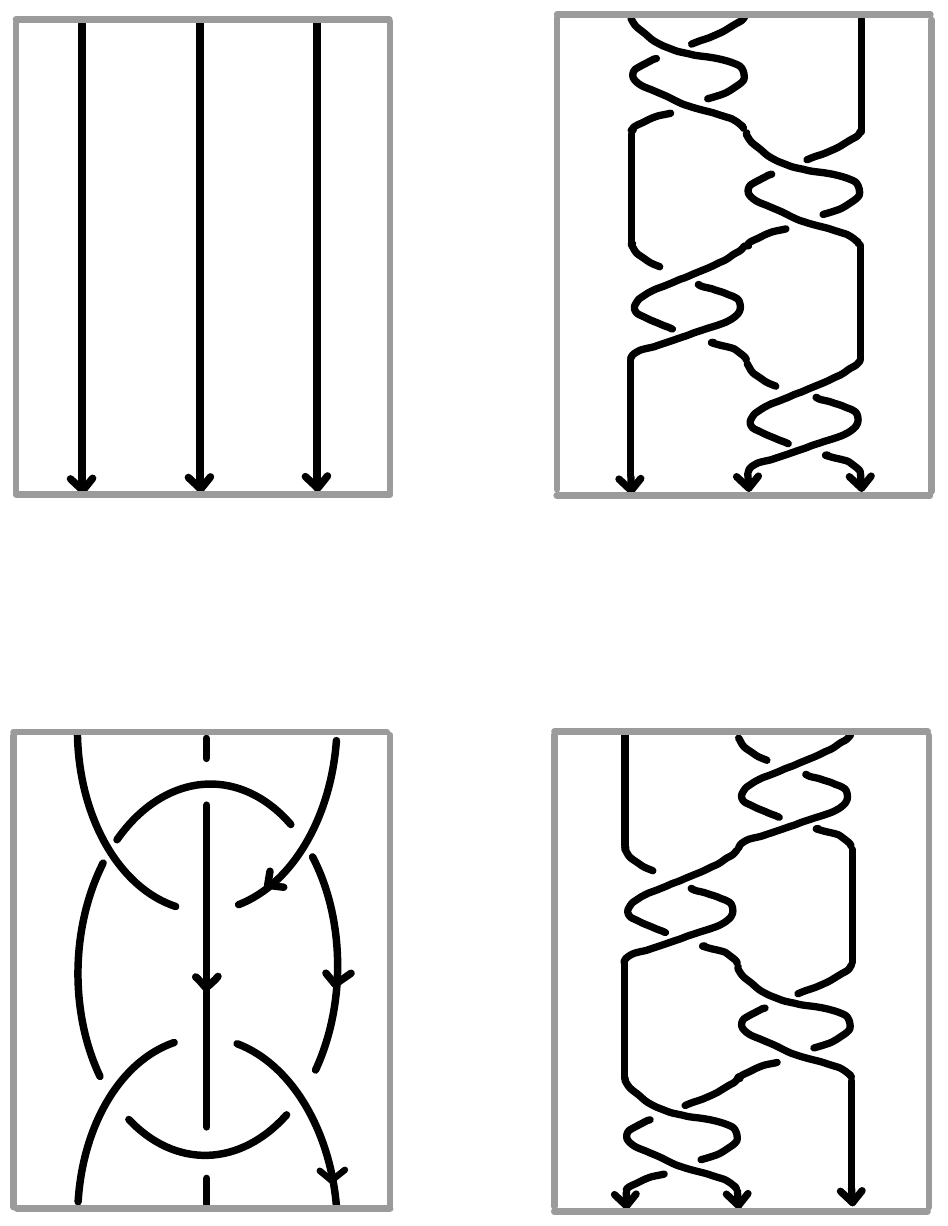} 
 \put(-270,175){$C_1$}\put(-270,150){$C_2$}\put(-270,125){$C_3$}\put(-120,175){$C_1$}\put(-120,150){$C_2$}\put(-120,125){$C_3$}\put(-270,64){$C_1$}\put(-270,38){$C_2$}\put(-270,13){$C_3$}\put(-120,70){$C_1$}\put(-70,82){$C_2$}\put(-120,40){$C_3$}
  \caption{Four examples distinguished by the triple linking number}
  \label{fig:3rd_linking} 
\end{figure}
\end{ex}

\begin{ex}
    The following two examples are singly periodic tangles $C$ and $C'$ in which all linking numbers vanish, and all triple linking numbers are the same respectively, and they can be distinguished by their 4th linking numbers.
    In the first case, ${\mu}_K(123)=2$ and ${\mu}_K(124)={\mu}_K(134)={\mu}_K(234)=0$, where $K$ is the motif illustrated in Figure~\ref{fig:4th_linking}.
Hence, $\Delta_K(1234)=2$. Moreover, ${\mu}_K(1234)=1$, so $\overline{\mu}_C(1234)=\tfrac{1}{2}$.
In the second case, all triple linking numbers of the motif $K'$ illustrated in Figure~\ref{fig:4th_linking} are the same as in the first case and thus $\Delta_{K'}(1234)=2$.
However, ${\mu}_{K'}(1234)=0$, which gives $\overline{\mu}_{C'}(1234)=\tfrac{0}{2}=0$.
    \begin{figure}[h]
  \centering
  \includegraphics[angle=90, width=0.8\textwidth]{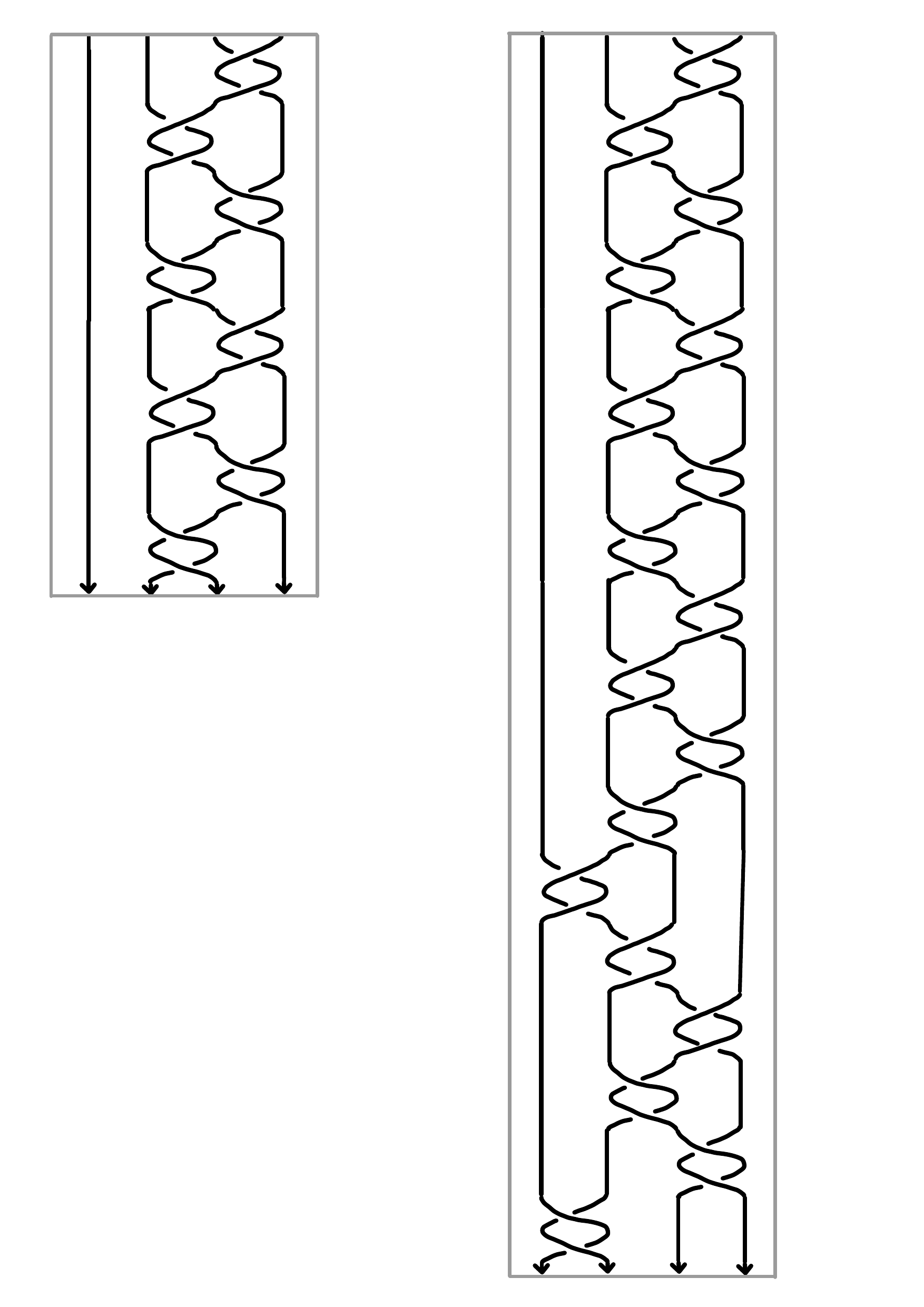} \put(-300,160){$C_1$}\put(-300,147){$C_2$}\put(-300,132){$C_3$}\put(-300,117){$C_4$}\put(-300,60){$C'_1$}\put(-300,45){$C'_2$}\put(-300,30){$C'_3$}\put(-300,15){$C'_4$}
  \caption{Two examples distinguished by the 4th linking number}
  \label{fig:4th_linking} 
\end{figure}
\end{ex}

\begin{thm}
Let $C$ be the union of $m$ oriented components $C_1, C_2, \cdots, C_m$ of a singly periodic tangle $\tau$. Then the order $m$ linking number $\overline{\mu}_C(J)$ does not depend on the choice of a good motif. 
Moreover, if there is a periodic isotopy from $\tau$ to a singly periodic tangle
$\tau'$ which maps components $C_i$ respectively to components $C'_i$, then $\overline{\mu}_C(J)=\overline{\mu}_{C'}(J)$. 
In other words, the order $m$ linking number is well-defined for an equivalence class of a singly periodic tangle.
\end{thm}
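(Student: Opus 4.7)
The plan is to mirror the strategy used in Theorem~\ref{thm:lk_sp}. By the lemma above together with the observation that any finite cover of a good motif remains good (as in the proof of Lemma~\ref{lem:good_sp}), any two good motifs of $\tau$ share a common good finite cover, so it suffices to prove that $\overline{\mu}_C(J)$ computed from a good motif $L$ agrees with that computed from its $n$-sheeted finite cover $L'$ for an arbitrary $n > 1$.

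The main case is when all $C_i$ are open. Here I would cut $S^{1} \times D^{2}$ along a meridional disk, chosen to meet $L$ transversely in the boundary of the fundamental domain $F$, so that $L$ is realized as an $m$-component string link $\sigma$ and $L'$ as the stacking product $\sigma \cdot \sigma \cdots \sigma$ consisting of $n$ copies. The vanishing hypothesis on linking numbers of length $\leq \lfloor (m+1)/2 \rfloor$ for $C$ amounts to the vanishing of the Milnor invariants of $\sigma$ of that length, so iterating Lemma~\ref{additive} gives
\[
\mu_{\sigma^{n}}(J') = n \, \mu_{\sigma}(J')
\]
for every $J'$ of length $\leq 2 \lfloor (m+1)/2 \rfloor$. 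This covers $J$ itself as well as every sequence obtained from $J$ by deleting at least one index; consequently both $\mu_K(J)$ and its indeterminacy $\Delta_K(J)$ are multiplied by $n$ when passing from $L$ to $L'$, so the quotient $\mu_K(J)/\Delta_K(J) \in \mathbb{Q}/\mathbb{Z}$ is preserved, and the signs in the $\Delta = 0$ case are preserved as well.

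When at least one $C_i$ is closed, the closed components of $K$ lie in the interior of $F$ by the goodness condition, and the preimage in $L'$ of each closed $K_i$ consists of $n$ translates of which only one, $K'_i$, corresponds to $C_i$. Restricting to the sublink $K'_{1} \cup \cdots \cup K'_{m}$, the extraneous translates are discarded and the resulting diagram has the same crossings as that of $K_1 \cup \cdots \cup K_m$ (up to trivial repositioning of the open strands through additional copies of $F$, which introduces no new crossings). Hence $\overline{\mu}_{K'}(J) = \overline{\mu}_K(J)$, giving well-definedness. The second assertion is then a quick corollary: a periodic isotopy from $\tau$ to $\tau'$ descends to an ambient isotopy between good motifs in $S^{1} \times D^{2}$, whose images in $\mathbb{R}^{3}$ under $\iota$ therefore have the same Milnor invariants.

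The main obstacle is the all-open case, specifically showing that $\Delta_K$ (not merely $\mu_K$) scales by $n$. This forces one to apply Meilhan--Yasuhara additivity not only to $J$ itself but to every proper subsequence appearing in the gcd defining $\Delta_K(J)$, and it is precisely this requirement that explains why the vanishing hypothesis is imposed on \emph{all} sequences of length $\leq \lfloor (m+1)/2 \rfloor$ rather than just on $J$.
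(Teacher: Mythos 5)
Your treatment of the all-open case is essentially the paper's own argument: cut to a string link $\sigma$, identify $L'$ with the $n$-fold stacking product, and use the Meilhan--Yasuhara additivity lemma (whose hypothesis is exactly the imposed vanishing of invariants of length $\leq \lfloor (m+1)/2 \rfloor$) to conclude that both $\mu_K(J)$ and $\Delta_K(J)$ scale by $n$, so the class in $\bbQ/\bbZ$ and the signs in the $\Delta=0$ case are preserved. That part, including your closing remark about why the vanishing hypothesis must cover all short sequences and not just $J$, is correct and matches the paper.

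The gap is in the case where some component is closed. You claim that after discarding the extraneous translates of the closed components, the diagram of $K'_1 \cup \cdots \cup K'_m$ ``has the same crossings'' as that of $K$, the open strands passing through the additional copies of $F$ introducing ``no new crossings.'' This is false as stated: in each of the $n-1$ extra copies of the fundamental domain, the open components repeat the full crossing pattern of the string link $\sigma_1 \cup \cdots \cup \sigma_{m'}$ they form, so $K'$ genuinely has $(n-1)$ additional copies of all crossings among the open components. These cannot be removed by ``trivial repositioning'' unless that string link is (link-homotopically) trivial, and even though $J$ must contain a closed index in this case, higher order Milnor invariants are sensitive to crossings among components not appearing in $J$. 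The paper closes this gap by writing $K'$ as the closure of $(\sigma_1 \cup \cdots \cup \sigma_{m'} \cup L_{m'+1} \cup \cdots \cup L_m) \cdot (n-1)(\sigma_1 \cup \cdots \cup \sigma_{m'})$ and invoking the second vanishing hypothesis in the definition (all invariants on sequences omitting the closed indices vanish) together with Lemma~\ref{additive}, so that the extra factor contributes nothing. Alternatively, that same hypothesis forces, inductively on length, the integer invariants $\mu_\sigma(J')$ of the open-strand string link to vanish, whence it is link-homotopically trivial by Habegger--Lin and your repositioning becomes legitimate; but either way an argument is required where you have only an assertion, and it is precisely here that the second vanishing condition in the definition is used.
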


\begin{proof}
To show the first assertion, let $L'$ be the $n$-sheeted cover of a good motif $L$ of $\tau$ for $n>1$.
Suppose that $K_i$ and $K'_i$ for $i=1,\cdots, m$ are the images of $C_i$ respectively in $L$ and $L'$ and denote $K_1 \cup\cdots\cup K_m$ by $K$ and $K'_1 \cup\cdots\cup K'_m$ by $K'$.
Let $F$ be a fundamental domain for $C$ that satisfies the conditions for a good motif.

If all $C_i$ are open, then
there is an $m$-component string link $\sigma$ such that the closure of $\sigma$ is link-homotopic to $K$ in $S^1 \times D^2$. Then, $K'$ is link-homotopic to the closure of $n \sigma$.
By Lemma~\ref{additive}, for any $J$ with length $\leq m$, $\mu_{n\sigma}(J)=n\cdot\mu_{\sigma}(J)$ and also $\Delta_{n\sigma}(J)=n\cdot\Delta_{\sigma}(J).$
Therefore, we have 
\begin{align*}
   \mu_{K'}(J) &=\mu_{n\sigma}(J) \mod \Delta_{n\sigma}(J) \ (=\Delta_{K'}(J))\\
  &=n\cdot\mu_{\sigma}(J) \mod n\cdot\Delta_{\sigma}(J)\\
  &=n\cdot\mu_K(J) \mod n\cdot\Delta_{K}(J).
\end{align*} 
Therefore, 
$\frac{\mu_{K'}(J)}{\Delta_{K'}(J)}=\frac{\mu_K(J)}{\Delta_K(J)} \in\mathbb{Q}/\mathbb{Z}.$

Otherwise, there is an embedding $\sigma_1 \cup\cdots\cup\sigma_{m'}\cup L_{m'+1}\cup\cdots\cup L_{m}$ of disjoint union of $m'$ intervals and $m-m'$ circles in $S^1 \times D^2$ such that $\sigma_1 \cup\cdots\cup\sigma_{m'}$ is a string link and the closure of the image is link-homotopic to $K$ in $S^1 \times D^2$.
By the assumption in the definition, ${\mu}_{K'}(J)={\mu}_K(J)=0$ for any non-repeating sequence $J$ in $\{1,\cdots , m'\}$.
Otherwise, because $K'$ is link-homotopic to the closure of $(\sigma_1 \cup\cdots\cup \sigma_{m'}\cup L_{m'+1}\cup\cdots\cup L_{m})\cdot (n-1)(\sigma_1 \cup\cdots\cup \sigma_{m'})$ in $S^1 \times D^2$ and
\begin{align*}
   \mu_{(\sigma_1 \cup\cdots\cup \sigma_{m'}\cup L_{m'+1}\cup\cdots\cup L_{m})\cdot (n-1)(\sigma_1 \cup\cdots\cup \sigma_{m'})}(J) 
  &=\mu_{\sigma_1 \cup\cdots\cup \sigma_{m'}\cup L_{m'+1}\cup\cdots\cup L_{m}}(J), 
\end{align*} 
we have $\overline{\mu}_{K'}(J)=\overline{\mu}_K(J)$. 
Therefore $\overline{\mu}_C(J)=\overline{\mu}_{C'}(J)$ for any non-repeating sequence $J$.
Hence the higher order linking number $\overline{\mu}_C(J)$ is well-defined. 
\end{proof}

In the case of doubly periodic tangles, unless $C$ contains a pair of open components $C_i$ and $C_j$ with distinct directions, $C$ is contained in a singly periodic tangle in the same way as Proposition \ref{prop:sp_in_dp}. Therefore, in this case, the higher order linking number can also be defined. The same applies to triply periodic tangles.

\section*{Acknowledgements} 
This work was supported by the World Premier International Research Center Initiative Program, International Institute for Sustainability with Knotted Chiral Meta Matter (WPI-SKCM$^2$), MEXT, Japan. 
The first author was supported by JSPS KAKENHI Grant Number 25K07005.

\bibliographystyle{plain}
\bibliography{ref-lnpt}

\begin{thebibliography}{10}

\bibitem{DLM23}
Ioannis Diamantis, Sofia Lambropoulou, and Sonia Mahmoudi.
\newblock Equivalence of doubly periodic tangles.
\newblock {\em arXiv preprint arXiv:2310.00822}, 2023.

\bibitem{GMO07}
Sergei~A. Grishanov, Vadim~R. Meshkov, and Alexander~V. Omelchenko.
\newblock Kauffman-type polynomial invariants for doubly periodic structures.
\newblock {\em J. Knot Theory Ramifications}, 16(6):779--788, 2007.

\bibitem{HL}
Nathan Habegger and Xiao-Song Lin.
\newblock The classification of links up to link-homotopy.
\newblock {\em J. Amer. Math. Soc.}, 3(2):389--419, 1990.

\bibitem{KMMY25}
Yuka Kotorii, Sonia Mahmoudi, Elisabetta Matsumoto, and Ken'ichi Yoshida.
\newblock On the isotopies of tangles in periodic 3-manifolds using finite
  covers.
\newblock {\em arXiv preprint arXiv:2505.20940}, 2025.

\bibitem{MY}
Jean-Baptiste Meilhan and Akira Yasuhara.
\newblock On {$C_n$}-moves for links.
\newblock {\em Pacific J. Math.}, 238(1):119--143, 2008.

\bibitem{Milnor1}
John Milnor.
\newblock Link groups.
\newblock {\em Ann. of Math. (2)}, 59:177--195, 1954.

\bibitem{Milnor2}
John~Willard Milnor.
\newblock {\em I{SOTOPY} {OF} {LINKS}}.
\newblock ProQuest LLC, Ann Arbor, MI, 1954.
\newblock Thesis (Ph.D.)--Princeton University.

\bibitem{Panagiotou15}
Eleni Panagiotou.
\newblock The linking number in systems with periodic boundary conditions.
\newblock {\em J. Comput. Phys.}, 300:533--573, 2015.

\bibitem{PM18}
Eleni Panagiotou and Kenneth~C. Millett.
\newblock Linking matrices in systems with periodic boundary conditions.
\newblock {\em J. Phys. A: Math. Theor.}, 51(22):225001, 2018.

\bibitem{PML13}
Eleni Panagiotou, Kenneth~C. Millett, and Sofia Lambropoulou.
\newblock Quantifying entanglement for collections of chains in models with
  periodic boundary conditions.
\newblock {\em Procedia IUTAM}, 7:251--260, 2013.

\bibitem{Takano25}
Shunsuke Takano, Yusuke Kochi, Ken'ichi Yoshida, Elisabetta~A. Matsumoto, Yuka
  Kotorii, Toru Asahi, and Katsuya Inoue.
\newblock Chiral analogues of knit stitches designed using chiral topology.
\newblock {\em in preparation}.

\end{thebibliography}

\end{document}